\newtheorem{lemma}{Lemma}
\newtheorem{theorem}{Theorem}
\newtheorem{claim}{Claim}
\newtheorem{remark}{Remark}
\newcommand{\F}{\mathcal{F}}
\newcommand{\N}{\mathcal{N}}
\DeclareMathOperator{\ex}{ex}
\begin{document}
\title{The Maximum Number of Paths of Length Three in a Planar Graph}

\author{
Andrzej Grzesik\thanks{Jagiellonian University, Krak\'ow, Poland. email: \texttt{Andrzej.Grzesik@uj.edu.pl}} \and
Ervin Gy\H{o}ri\thanks{Alfr\'ed R\'enyi Institute of Mathematics. email: \texttt{gyori.ervin@renyi.mta.hu}} \footnotemark[4] \and
Addisu Paulos\thanks{Addis Ababa University, Addis Ababa. email: \texttt{addisu\_2004@yahoo.com}} \thanks{Central European University, Budapest.} \and
Nika Salia\thanks{Alfr\'ed R\'enyi Institute of Mathematics. email: \texttt{salia.nika@renyi.hu}}\hspace{.08em}
\footnotemark[4] \and 
Casey Tompkins\thanks{Discrete Mathematics Group, Institute for Basic Science (IBS), Daejeon, Republic of Korea.} \thanks{Karlsruhe Institute of Technology, Karlsruhe, Germany. email:\texttt{ctompkins496@gmail.com}} \and 
Oscar Zamora\thanks{Centro de Investigaci\'on en Matem\'atica Pura y Aplicada (CIMPA), Escuela de Matem\'atica,
Universidad de Costa Rica. San Jos\'e. email: \texttt{oscar.zamoraluna@ucr.ac.cr}}
\;\footnotemark[4]
}
\maketitle
\begin{abstract}
Let $f(n,H)$ denote the maximum number of copies of $H$ possible in an $n$-vertex planar graph. The function $f(n,H)$ has been determined when $H$ is a cycle of length $3$ or $4$ by Hakimi and Schmeichel and when $H$ is a complete bipartite graph with smaller part of size 1 or 2 by Alon and Caro. We determine $f(n,H)$ exactly in the case when $H$ is a path of length 3.
\end{abstract}
\section{Introduction and main result}
In recent times, generalized versions of the extremal function $\ex(n,H)$ have received considerable attention.  For graphs $G$ and $H$, let $\N(H,G)$ denote the number of subgraphs of $G$ isomorphic to~$H$ (referred to as \emph{copies} of $H$). Let $\F$ be a family of graphs, then a graph $G$ is said to be $\F$-free if it contains no graph from $\F$ as a subgraph. Alon and Shikhelman~\cite{ALS2016} introduced the following generalized extremal function (stated in higher generality in~\cite{add}),
\[
\ex(n,H,\F)=\max \{\N(H,G): \mbox{$G$ is an $\F$-free graph on $n$ vertices} \}. 
\]
If $\F=\{F\}$, we simply write $\ex(n,H,F)$. The earliest result of this type is due to Zykov~\cite{zykov} (and also independently by Erd\H{o}s~\cite{E1962}), who determined $\ex(n,K_s,K_t)$ exactly for all $s$ and $t$. Erd\H{o}s conjectured that asymptotically $\ex(n,C_5,C_3) = (\frac{n}{5})^5$ (where the lower bound comes from considering a blown up $C_5$).  This conjecture was finally verified quarter of a century later by Hatami, Hladk\'y, Kr\'al, Norine and Razborov~\cite{HHKNR2013} and independently by Grzesik~\cite{G2012}. Recently, the asymptotic value of $\ex(n,C_k,C_{k-2})$ was determined for every odd $k$ by  Grzesik and Kielak~\cite{GK2018}. In the opposite direction, the extremal function $\ex(n,C_3,C_5)$ was considered by Bollob\'as and Gy\H{o}ri~\cite{BGy2008}. Their results were subsequently improved in the papers~\cite{ALS2016},~\cite{C5C3} and~\cite{C5C3v2}, but the problem of determining the correct asymptotic remains open. The problem of maximizing $P_\ell$ copies in a $P_k$-free graph was investigated in~\cite{plpk}.

It is interesting that although maximizing copies of a graph $H$ in the class of $F$-free graphs has been investigated heavily, maximizing $H$-copies in other natural graph classes has received less attention.  In the setting of planar graphs such a study was initiated by Hakimi and Schmeichel~\cite{hakimi}. Let $f(n,H)$ denote the maximum number of copies of $H$ possible in an $n$-vertex planar graph. Observe that $f(n,H)$ is equal to $\ex(n,H,\F)$ where $\F$ is the family of $K_{3,3}$ or $K_5$ subdivisions~\cite{karatowski}.  
The case when $H$ is a clique and $\F$ is a family of clique minors has also been investigated (see, for example,~\cite{fox},~\cite{k5},~\cite{kcliques}).

Hakimi and Schmeichel determined the function $f(n,H)$ when $H$ is a triangle or cycle of length four.  Moreover, they classified the extremal graphs attaining this bound (a small correction to their result was given in~\cite{ahmad}).
\begin{theorem}[Hakimi and Schmeichel~\cite{hakimi}]\label{hakimi}
Let $G$ be a maximal planar graph with $n\geq 6$ vertices, then $\N(C_3,G)\leq 3n-8$. 
This bound is attained if and only if $G$ is a graph is obtained from $K_3$ by recursively placing a vertex inside a face and joining the vertex to the three vertices of that face (graphs constructed in this way are referred to as Apollonian networks).
\end{theorem}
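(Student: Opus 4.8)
The plan is to argue by strong induction on $n$, establishing the slightly more convenient statement that every plane triangulation on $n\ge 4$ vertices has at most $3n-8$ triangles, with equality precisely for the Apollonian networks; the cases $n=4,5$, where the triangulation is unique and is an Apollonian network, serve as the base. Fix a plane embedding. A triangulation has $2n-4$ faces, each a triangle, and for $n\ge 4$ distinct faces have distinct vertex sets, so $\N(C_3,G)\ge 2n-4$; moreover a triangle of $G$ fails to bound a face exactly when it is \emph{separating}, i.e.\ has vertices of $G$ both in its interior and in its exterior. Hence if $G$ has no separating triangle then $\N(C_3,G)=2n-4$, which is strictly below $3n-8$ for $n\ge 5$, so in this case the bound holds with room to spare and $G$ is not extremal. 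Assume then that $G$ has a separating triangle $T=xyz$.

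I would let $V_1,V_2$ be the nonempty sets of vertices strictly inside and strictly outside $T$, put $n_i=\abs{V_i}$ (so $n_1+n_2+3=n$), and set $G_i=G[V_i\cup\{x,y,z\}]$. One verifies that each $G_i$ is again a plane triangulation on $n_i+3$ vertices: its bounded faces are bounded faces of $G$, hence triangles, and since $G$ triangulates the closed disc cut off by $T$ on the $V_i$ side, the outer boundary of $G_i$ is exactly the triangle $xyz$. Because no edge of $G$ crosses $T$, every triangle of $G$ has all three vertices in $V(G_1)$ or all three in $V(G_2)$, so it is a triangle of $G_1$ or of $G_2$; conversely every triangle of a $G_i$ is a triangle of $G$, and the only triangle lying in both is $T$. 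Therefore
\[
\N(C_3,G)=\N(C_3,G_1)+\N(C_3,G_2)-1 .
\]
Since $4\le n_i+3\le n-1$, the inductive hypothesis gives $\N(C_3,G_i)\le 3(n_i+3)-8$, and adding these bounds yields $\N(C_3,G)\le 3n-8$; furthermore equality forces $\N(C_3,G_i)=3(n_i+3)-8$ for both $i$, whence by induction $G_1$ and $G_2$ are Apollonian networks.

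For the characterization, the ``if'' direction is the routine check that inserting a vertex into a face of a triangulation adds one vertex and exactly three triangles (the three through the new vertex, whose neighbourhood is that facial triangle), so an Apollonian network on $n$ vertices has $1+3(n-3)=3n-8$ triangles. For ``only if'', suppose $\N(C_3,G)=3n-8$ with $n\ge5$; then $\N(C_3,G)>2n-4$, so $G$ has a separating triangle and the decomposition above applies with $G_1$ an Apollonian network. I would then use the auxiliary fact — which follows from a short induction — that an Apollonian network on at least four vertices has a vertex of degree $3$ not lying on any one prescribed face (for at least five vertices there are at least two vertices of degree $3$ and no face contains two of them; for $K_4$ it is immediate). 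Applied to $G_1$ and its face $T$, this gives $w\in V_1$ with $\deg_G(w)=3$. Then $G-w$ is a triangulation on $n-1$ vertices with $\N(C_3,G-w)=3n-8-3=3(n-1)-8$, hence an Apollonian network by induction; and $N_G(w)$ is a triangle bounding the face of $G-w$ created by the deletion of $w$, so $G$ is obtained from the Apollonian network $G-w$ by a single insertion, and is itself Apollonian.

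The steps I expect to be the most delicate are the structural claims tied to a separating triangle — that $G_1$ and $G_2$ are triangulations with outer face exactly $T$, and that the set of triangles of $G$ is the union of those of $G_1$ and $G_2$ with intersection $\{T\}$ — together with the auxiliary lemma on degree-$3$ vertices of Apollonian networks; granting these, what remains is routine bookkeeping with Euler's formula and the induction. An alternative following Hakimi and Schmeichel more closely is to delete a vertex of minimum degree, necessarily $3$, $4$, or $5$, and re-triangulate the resulting face; the degree-$3$ case is as painless as above, but the degree-$4$ and degree-$5$ cases require a careful accounting of how the re-triangulation changes the triangle count, which is why I would favour the separating-triangle decomposition.
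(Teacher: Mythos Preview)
The paper does not prove this theorem at all: it is stated as a cited background result of Hakimi and Schmeichel (reference~\cite{hakimi}), with no argument given. So there is no ``paper's own proof'' to compare against.

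That said, your separating-triangle induction is a correct and self-contained proof. The additive relation $\N(C_3,G)=\N(C_3,G_1)+\N(C_3,G_2)-1$ is exactly right (no edge crosses $T$, so every triangle lies entirely in one side, and only $T$ itself is shared), and the pieces $G_i$ are indeed triangulations with outer face $T$. Your auxiliary lemma is also sound: in an Apollonian network on $\ge 5$ vertices no two degree-$3$ vertices can be adjacent (their closed neighbourhoods would force a $K_4$ with no room for a fifth vertex without raising one of the two degrees), hence no face contains two of them, and an easy induction keeps the count of degree-$3$ vertices at least $2$. The ``only if'' step then goes through since a degree-$3$ vertex strictly inside $T$ has the same degree in $G$ as in $G_1$. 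The approach you describe as ``closer to Hakimi--Schmeichel'' (delete a minimum-degree vertex and re-triangulate) is indeed the one in the original source, and as you note it requires separate bookkeeping for degrees $4$ and $5$; your decomposition avoids that case analysis at the cost of the small structural lemma about degree-$3$ vertices in Apollonian networks.
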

\begin{theorem}[Hakimi and Schmeichel~\cite{hakimi}, Alameddine~\cite{ahmad}] \label{ahmad_hakimi}
Let $G$ be a maximal planar graph with $n\geq 5$ vertices, then $\N(C_4,G)\leq \frac{1}{2}(n^2+3n-22)$. 
For $n\neq 7,8$, the bound is attained if and only if $G$ is the graph shown in Figure~\ref{qq}(A). For $n=7$, the bound is attained if and only if $G$ is the graph in Figure~\ref{qq}($A$)~or~(B). For $n=8$, the bound is attained if and only if $G$ is the graph in Figure~\ref{qq}($A$)~or~($C$).
\end{theorem}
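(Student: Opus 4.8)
\emph{Proof idea.} The argument I would run rests on the identity, valid in any graph,
\[
2\,\N(C_4,G)\;=\;\sum_{\{x,y\}}\binom{\abs{N(x)\cap N(y)}}{2},
\]
where the sum is over unordered pairs of vertices: a copy of $C_4$ is a copy of $K_{2,2}$, and it is counted exactly once for each of its two ``diagonal'' pairs $\{x,y\}$, whose common neighbourhood then contains the opposite pair. So the task is to prove $\sum_{\{x,y\}}\binom{\abs{N(x)\cap N(y)}}{2}\le n^{2}+3n-22$. The configuration to keep in mind is the triangulation $G^{\ast}$ formed from an edge $uv$ by adding vertices $x_{1},\dots ,x_{n-2}$, joining each $x_{i}$ to both $u$ and $v$, and adding the path $x_{1}x_{2}\cdots x_{n-2}$: it has $3n-6$ edges, hence is maximal planar; the pair $\{u,v\}$ has $n-2$ common neighbours while every other pair has at most three; and summing $\binom{\,\cdot\,}{2}$ returns exactly $n^{2}+3n-22$. (Presumably this is the graph of Figure~\ref{qq}(A).) The decisive feature of $G^{\ast}$ is that $u$ and $v$ are adjacent to \emph{every} other vertex, so that the count is fed from two sources at once: the single term $\binom{n-2}{2}$ from the pair $\{u,v\}$ itself, and a further $\approx\binom{n-2}{2}$ from the fact that every pair of the remaining vertices then has $u$ and $v$ among its common neighbours, so contributes at least $1$ to the sum.

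The plan is to organise the count around the pair of highest codegree. Let $\{u,v\}$ maximise $k:=\abs{N(u)\cap N(v)}$ over all vertex pairs, let $z_{1},\dots ,z_{k}$ be their common neighbours, and fix a plane embedding. The $k$ internally disjoint paths $u\,z_{i}\,v$, together with the edge $uv$ if it is present, divide the sphere into $k$ or $k+1$ closed cells, each of which is a triangulated disc once its interior is filled in; since $z_{1},\dots ,z_{k}$ are \emph{all} the common neighbours of $u$ and $v$, no cell contains a vertex adjacent to both of them, and so the remaining $n-k-2$ vertices, along with all chords among the $z_{i}$, are parcelled out among the cells. I would then split $\sum_{\{x,y\}}\binom{\abs{N(x)\cap N(y)}}{2}$ into the term $\binom{k}{2}$ coming from $\{u,v\}$, the ``automatic'' contributions of pairs both of whose vertices sit on, or straddle, a single cell, and a correction from pairs of codegree at least three, estimating the last two cell by cell using Euler's formula for a triangulated disc and the triangle bound $\sum_{xy\in E(G)}\abs{N(x)\cap N(y)}=3\,\N(C_3,G)\le 3(3n-8)$ furnished by Theorem~\ref{hakimi}. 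Each per-cell estimate is convex in the number of vertices placed in that cell, so the total is largest when one cell absorbs everything; optimising the resulting one-variable expression in $k$ should then place the maximum at $k=n-2$, where the two contributions above are simultaneously as large as possible, yielding exactly $n^{2}+3n-22$ and, for large $n$, only the graph $G^{\ast}$.

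For equality one runs these inequalities backwards: attaining the bound forces $k=n-2$ (so $u$ and $v$ dominate every vertex), forces the cell structure among the $x_{i}$ to be a single path rather than a cycle, and forces each $x_{i}$ to have no neighbours besides $u$, $v$ and its path-neighbours, which pins down $G^{\ast}$. I expect two points to be the genuine obstacles. First, ruling out a \emph{second} pair of large codegree, and making the cell-by-cell accounting sharp enough to produce the exact constant rather than merely the right order $n^{2}/2$, is delicate, because the ``automatic'' and the ``pair'' contributions trade off against one another and must be balanced precisely. Second --- and this is exactly where the original argument of Hakimi and Schmeichel needed the correction supplied by Alameddine --- for $n=7$ and $n=8$ several slack terms in the estimate collapse simultaneously, so that the triangulations of Figure~\ref{qq}(B) and~(C) also attain the bound; extracting these requires a direct finite check of the triangulations on $7$ and $8$ vertices containing a pair of large codegree.
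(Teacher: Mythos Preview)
The paper does not prove Theorem~\ref{ahmad_hakimi}; it is quoted as a background result from \cite{hakimi} and \cite{ahmad}, so there is no in-paper proof to compare your proposal against.

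For context, the original Hakimi--Schmeichel argument is not a direct codegree estimate but an induction on $n$ in the same spirit as the paper's proof of Theorem~\ref{thm1}: one takes a vertex $v$ of minimum degree (which is $3$, $4$, or $5$), bounds the number of $C_4$'s through $v$ in terms of the degrees of its neighbours, deletes $v$ (adding an edge or two to restore maximality when $d(v)\in\{4,5\}$), and applies the inductive hypothesis to the resulting triangulation on $n-1$ vertices. Alameddine's contribution in \cite{ahmad} was precisely to repair the uniqueness analysis at $n=7,8$, where the per-vertex bound is met by more than one triangulation. Your approach via the identity $2\,\N(C_4,G)=\sum_{\{x,y\}}\binom{\abs{N(x)\cap N(y)}}{2}$ and a cell decomposition around a maximum-codegree pair is genuinely different and, in principle, could work; the identity and your evaluation on $G^{\ast}$ are correct.

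That said, what you have written is a plan rather than a proof, and the step you yourself flag as ``delicate'' is where essentially all of the content lies. The per-cell bound you would need is not just a convexity statement: you must simultaneously control (i) the $\binom{k}{2}$ term from the top pair, (ii) the $\Theta(n^{2})$ many pairs of codegree~$2$ produced when two vertices dominate, and (iii) the $O(n)$ pairs of codegree~$3$, and show that no redistribution of vertices among cells, and no second high-codegree pair, can beat the configuration with $k=n-2$. Your sketch does not indicate how the cell-by-cell estimate would be made sharp enough to recover the exact constant $n^{2}+3n-22$ rather than the asymptotic $n^{2}+O(n)$; the inductive route sidesteps this by reducing to an $O(n)$ bound on $C_4$'s through a single low-degree vertex, which is much easier to make exact. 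If you want to pursue the direct approach, the cleanest way forward is probably to bound $\sum\binom{d(x,y)}{2}$ by first controlling $\max d(x,y)$ and $\sum d(x,y)^{2}$ separately, using that $\sum_{xy\in E}d(x,y)=3\,\N(C_3,G)\le 3(3n-8)$ and that $\sum_{\{x,y\}}d(x,y)=\sum_{z}\binom{d(z)}{2}$, but you should expect the sharp constant to require nontrivial additional work.
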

\begin{figure}[h]
\centering
\begin{tikzpicture}[scale=0.5]
\foreach \x in{1,2,3,4,5}{\draw[fill=black](0,\x)circle(4pt);};
\draw[fill=black](-3,0)circle(5pt);
\draw[fill=black](3,0)circle(5pt);
\foreach \x in{1,2,4}{\draw[thick](0,\x)--(0,\x+1);}
\draw (0,3.625)node{$\vdots$};
\foreach \x in{1,2,3,4,5}{\draw[thick](-3,0)--(0,\x)--(3,0);}
\draw[black,thick](-3,0)--(3,0);
\node at (0,-1) {$(A)$ $F_n$};
\end{tikzpicture}\qquad
\begin{tikzpicture}[scale=0.5]
\foreach \x in{1,2,4,5}{\draw[fill=black](0,\x)circle(4pt);};
\draw[fill=black](-3,0)circle(4pt);
\draw[fill=black](-0.6,2.4)circle(4pt);
\draw[fill=black](3,0)circle(4pt);
\foreach \x in{1,4}{\draw[thick](0,\x)--(0,\x+1);}
\foreach \x in{1,2,4,5}{\draw[thick](-3,0)--(0,\x)--(3,0);}
\draw[black,thick](-3,0)--(3,0)(0,2)--(0,4)(-3,0)--(-0.6,2.4)--(0,4)(-0.6,2.4)--(0,2);
\node at (0,-1) {$(B)$};
\end{tikzpicture}\qquad
\begin{tikzpicture}[scale=0.5]
\foreach \x in{1,2,4,5}{\draw[fill=black](0,\x)circle(4pt);};
\draw[fill=black](-3,0)circle(4pt);
\draw[fill=black](-0.6,2.4)circle(4pt);
\draw[fill=black](0.6,2.4)circle(4pt);
\draw[fill=black](3,0)circle(4pt);
\foreach \x in{1,4}{\draw[thick](0,\x)--(0,\x+1);}
\foreach \x in{1,2,4,5}{\draw[thick](-3,0)--(0,\x)--(3,0);}
\draw[black,thick](-3,0)--(3,0)(0,2)--(0,4)(-3,0)--(-0.6,2.4)--(0,4)(-0.6,2.4)--(0,2)(3,0)--(0.6,2.4)--(0,4)(0.6,2.4)--(0,2);
\node at (0,-1) {$(C)$};
\end{tikzpicture}
\caption{Planar graphs maximizing the number of cycles of length $4$.}
\label{qq}
\end{figure}
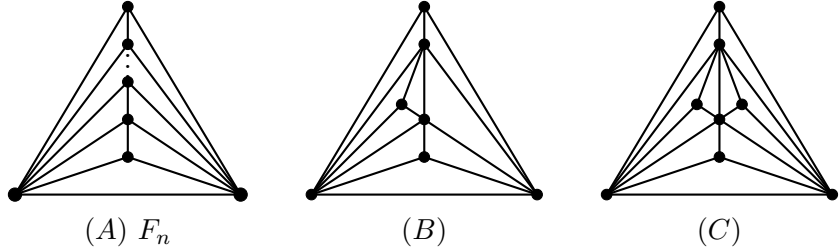
Thus, we have $f(n,C_3)=3n-8$ when $n\geq 6$ and $f(n,C_4)=\frac{1}{2}(n^2+3n-22)$ for $n\geq 5$. In~\cite{us}, the last five authors extended the results of Hakimi and Schmeichel by determining $f(n,C_5)$ for all~$n$. Asymptotic results for $f(n,C_{2k})$ with $k=3,4,5,6$, have recently been obtained by Cox and Martin~\cite{ccox,ccox2}.   

In the case when $H$ is a complete bipartite graph, Alon and Caro~\cite{alon}  determined the value of $f(n,H)$ exactly. They obtained the following results. 
\begin{theorem}[Alon and Caro~\cite{alon}]
\label{tm}
For all $k\geq 2$ and $n\geq 4$, \[f(n,K_{1,k})=2\binom{n-1}{k}+2\binom{3}{k}+(n-4)\binom{4}{k}.\]
\end{theorem}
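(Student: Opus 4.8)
The plan is to start from the identity $\N(K_{1,k},G)=\sum_{v\in V(G)}\binom{d_G(v)}{k}$, valid because a copy of $K_{1,k}$ is a centre together with a $k$-element subset of its neighbourhood. Since $x\mapsto\binom{x}{k}$ is nondecreasing, inserting an edge never decreases this sum, so one may assume $G$ is a maximal planar graph, whence $\sum_v d_G(v)=6n-12$ and $\delta(G)\ge 3$. For the graph $F_n$ of Figure~\ref{qq}(A), which has two vertices of degree $n-1$, $n-4$ of degree $4$ and two of degree $3$, the identity gives $\N(K_{1,k},F_n)=2\binom{n-1}{k}+(n-4)\binom{4}{k}+2\binom{3}{k}$, so the goal is to show that $F_n$ maximises $\sum_v\binom{d_G(v)}{k}$.

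The key structural input is a rigidity lemma: \emph{if a maximal planar graph $G$ on $n$ vertices has two vertices $u,w$ of degree $n-1$, then $G\cong F_n$}. Indeed $u$ and $w$ are then adjacent to all other vertices; fixing a plane embedding, the link of $u$ is a Hamilton cycle $C$ of $G-u$, necessarily passing through $w$. The $n-1$ edges at $u$, the $n-1$ edges of $C$, and the $n-4$ edges from $w$ to the vertices of $C$ not consecutive to $w$ on $C$ are all distinct and already total $3n-6=|E(G)|$; hence $G$ has no further edges, $G-\{u,w\}$ is precisely the path obtained from $C$ by deleting $w$, and $G\cong F_n$.

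I would then split according to $\Delta(G)$. If $G$ has at least two vertices of degree $n-1$, the lemma gives $G\cong F_n$ and equality. If $G$ has a unique vertex $v_0$ of degree $n-1$, then $v_0$ is universal, $H:=G-v_0$ is a maximal outerplanar graph on $n-1$ vertices, and every $v\ne v_0$ gains exactly one edge, so $\N(K_{1,k},G)=\binom{n-1}{k}+\sum_{v\in V(H)}\binom{d_H(v)+1}{k}$; it therefore suffices to prove the auxiliary bound
\[
\sum_{v\in V(H)}\binom{d_H(v)+1}{k}\ \le\ \binom{m}{k}+(m-3)\binom{4}{k}+2\binom{3}{k}
\]
for every maximal outerplanar $H$ on $m$ vertices (the right side being the value for the fan), since for $m=n-1$ this yields exactly $\N(K_{1,k},G)\le\N(K_{1,k},F_n)$. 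This I would prove by induction on $m$, deleting a degree-$2$ vertex (``ear'') $w$ with neighbours $a,b$: then $d_H(a),d_H(b)$ each drop by $1$, and using $\binom{d+2}{k}-\binom{d+1}{k}=\binom{d+1}{k-1}$ the statement for $H$ follows from that for $H-w$ together with $\binom{d_{H-w}(a)+1}{k-1}+\binom{d_{H-w}(b)+1}{k-1}\le\binom{m-2}{k-1}+\binom{3}{k-1}$, which holds whenever one of $a,b$ has degree $2$ in $H-w$. Finally, when $\Delta(G)\le n-2$ I would argue directly from $\N(K_{1,k},G)=n\binom{3}{k}+\sum_{i\ge 4}\binom{i-1}{k-1}n_{\ge i}(G)$, where $n_{\ge i}(G)$ is the number of vertices of degree at least $i$: here $n_{\ge i}(G)=0$ for $i\ge n-1$ and $\sum_{i\ge4}n_{\ge i}(G)=3n-12$, while the planar estimates $|E(G)|\le3n-6$ and $e(S,V\setminus S)\le2n-4$ give $n_{\ge i}(G)\le2$ once $i>\tfrac{2n+2}{3}$ and, more sharply, that two vertices of degree $n-2$ force every remaining vertex to have degree at most $6$; feeding these into the increasing weights $\binom{i-1}{k-1}$ keeps the sum below $2\binom{n-1}{k}+(n-4)\binom{4}{k}+2\binom{3}{k}$.

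The hard part is that the target inequality has essentially no slack, and crucially $F_n$ is \emph{not} majorisation-maximal among planar degree sequences — for $n=7$, the maximal planar graph with degree sequence $(6,5,5,5,3,3,3)$ attains the same number of $K_{1,2}$'s as $F_7$ — so pure convexity of $\binom{\cdot}{k}$ cannot finish the argument and one must use the exact polynomials $\binom{\cdot}{k}$ together with the precise planar constraints. The delicate points, each of which should succumb to a short computation, are the ear-deletion step in the outerplanar case when \emph{every} ear of $H$ is attached to two vertices of degree $\ge 4$ (so the displayed estimate is not automatic and one must re-root the induction or treat such $H$ by hand), the near-regular triangulations that arise when $\Delta(G)\le n-2$, and the finitely many small values of $n$.
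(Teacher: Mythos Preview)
The paper does not prove this theorem at all: it is quoted from Alon and Caro~\cite{alon} and used only as background (to deduce $f(n,P_3)=n^2+3n-16$). So there is no ``paper's own proof'' to compare against, and your proposal must stand or fall on its own.

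Your outline is reasonable---the identity $\N(K_{1,k},G)=\sum_v\binom{d(v)}{k}$, the reduction to triangulations, and the rigidity lemma for two universal vertices are all correct---but the two remaining cases have genuine gaps that you yourself flag, and they are not as soft as you suggest. In the outerplanar case your ear-deletion step requires an ear with a neighbour of degree at most $3$, and this can simply fail: take $m=6$ with outer cycle $v_1\cdots v_6$ and chords $v_1v_3,v_3v_5,v_5v_1$; every ear has both neighbours of degree $4$, and your displayed inequality becomes $\binom{4}{k-1}\le\binom{3}{k-1}$, which is false. Worse, for $k=2$ this very $H$ attains equality in your target bound (corresponding to the non-uniqueness of the extremal graph at $n=7$ that you already noticed), so it cannot be disposed of by a slack argument---the induction has to be restructured to carry these alternative extremal configurations along.

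The $\Delta(G)\le n-2$ case is also underspecified. Your bound $n_{\ge i}\le 2$ only kicks in for $i>(2n+2)/3$, whereas $F_n$ has $n_{\ge i}=2$ already from $i=5$; to beat $F_n$ you must control $n_{\ge i}$ over the whole range, and the single estimate $d(a)+d(b)+d(c)\le 2n+2$ is not enough to do that. The special observation about two vertices of degree $n-2$ forcing all other degrees $\le 6$ is correct and useful, but you still need a complete argument covering at most one vertex of degree $n-2$, and the ``feeding these into the increasing weights'' sentence is not yet a proof. In short, the architecture is sound but both nontrivial cases need substantially more than a short computation.
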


\begin{theorem}[Alon and Caro~\cite{alon}]\label{tm2}

For all $k\geq 2$ and $n\geq 4$,

\begin{align*}
f(n,K_{2,k})= \begin{cases}
\binom{n-2}{k}, &\text{if $k\geq 5$ or $k=4$ and $n \neq 6$;}\\
3, &\text{if $(k,n)=(4,6)$;}\\
\binom{n-2}{3}, &\text{if $k=3$, $n\neq 6$;}\\
12, &\text{if $(k,n)=(3,6)$;}\\
\binom{n-2}{2}+4n-14, &\text{if $k=2$.}
\end{cases}
\end{align*}
\end{theorem}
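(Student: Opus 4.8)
The plan is to translate the problem into one about common neighbourhoods. A copy of $K_{2,k}$ is a choice of its part of size $2$ — an unordered pair $\{u,v\}$ — together with a $k$-element set of common neighbours of $u$ and $v$; for $k\ge 3$ the two parts have different sizes and so cannot be confused. Writing $c(u,v)=|N(u)\cap N(v)|$, this gives
\[
\N(K_{2,k},G)=\sum_{\{u,v\}}\binom{c(u,v)}{k}\quad(k\ge 3),\qquad \N(K_{2,2},G)=\frac12\sum_{\{u,v\}}\binom{c(u,v)}{2},
\]
the factor $\tfrac12$ for $k=2$ accounting for the automorphism of $K_{2,2}=C_4$ swapping its parts. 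The case $k=2$ is just $\N(C_4,G)$, which never decreases when an edge is added; hence $f(n,C_4)$ is attained by a triangulation and, by Theorem~\ref{ahmad_hakimi}, equals $\tfrac12(n^2+3n-22)=\binom{n-2}{2}+4n-14$ for $n\ge 5$, with $n=4$ ($K_4$, giving $3$) checked directly. So from now on $k\ge 3$. Here the basic input is that a planar graph contains no $K_{3,3}$, so any three vertices, hence any $k$-set $S$ with $k\ge 3$, satisfies $|N(S)|\le 2$; consequently $\N(K_{2,k},G)$ equals the number of $k$-subsets of $V(G)$ having at least two common neighbours.

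The structural engine is the embedding picture of a large common neighbourhood. Fix a plane drawing of $G$. A pair $\{u,v\}$ with $c(u,v)=m\ge 3$ spans a plane $K_{2,m}$ whose $m$ common neighbours can be cyclically ordered $w_1,\dots,w_m$ so that its $m$ faces are exactly the quadrilaterals $uw_iv w_{i+1}$ (indices mod $m$); every other vertex of $G$ lies strictly inside one such face, and such a vertex is adjacent to at most one of $u,v$ and to at most two of the $w_i$. I would apply this "fan" picture to a pair $\{u^*,v^*\}$ with $c(u^*,v^*)=m^*\le n-2$ maximum: each copy of $K_{2,k}$ in $G$ is either contained in $\{u^*,v^*\}\cup\{w_1,\dots,w_{m^*}\}$, or confined to the closure of a single face of the fan together with the (at most four) boundary vertices of that face. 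Copies of the first type number at most $\binom{m^*}{k}\le\binom{n-2}{k}$; the crux is to bound the contributions of the second type, over all faces, so that the total does not exceed $\binom{n-2}{k}$ (for $k\ge 4$, $n\ne 6$), with the sharp analogues for $k=3$ and for the exceptional value at $n=6$.

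With this in hand the cases separate. For $k\ge 5$, $\binom{c(u,v)}{k}=0$ unless $c(u,v)\ge k\ge 5$, and the fan analysis leaves essentially a unique pair with $c\ge 5$, yielding $\binom{n-2}{k}$, attained only when $G\supseteq K_{2,n-2}$; extending $K_{2,n-2}$ to a planar graph only adds edges on the side of size $n-2$ and creates no new $K_{2,k}$, so the bound is tight. For $k=4$ one must also control pairs with $c=4$: several such pairs with no larger common neighbourhood force the octahedron $K_{2,2,2}$ on $6$ vertices (three antipodal pairs, each with $c=4$), giving $f(6,K_{2,4})=3$, while the remaining cases at $n=6$ or with a second $c=4$ pair are checked by hand. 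For $k=3$ one further tracks pairs with $c=3$ (each contributing $1$), and the octahedron is again responsible for the exceptional value $f(6,K_{2,3})=12$; the general bound and the extremal graphs are read off from the fan picture as above.

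The main obstacle is precisely the inequality of the second paragraph: bounding $\sum_{\{u,v\}}\binom{c(u,v)}{k}$ by $\binom{n-2}{k}$ using the fan of the extremal pair. The difficulty is combinatorial rather than topological — a single pair's contribution is capped immediately by the fan, but a planar graph can carry many pairs with $c\ge 3$ (the graphs $F_n$ of Figure~\ref{qq} have $\Theta(n)$ of them), so one must rule out that such pairs, by sharing vertices, nesting their fans, or straddling faces, collectively beat one maximal $K_{2,n-2}$. This same casework is also what surfaces the small exceptional configurations — the octahedron on $6$ vertices, and the sporadic extremal graphs on $6,7,8$ vertices already seen in Theorems~\ref{hakimi} and~\ref{ahmad_hakimi} — which then have to be enumerated and handled individually.
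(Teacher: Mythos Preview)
The paper does not contain a proof of this theorem: Theorem~\ref{tm2} is stated as background, attributed to Alon and Caro~\cite{alon}, and is not proved anywhere in the manuscript. There is therefore nothing in the paper to compare your argument against.

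As a separate remark on the proposal itself: your translation via common neighbourhoods and the reduction of the $k=2$ case to Theorem~\ref{ahmad_hakimi} are correct, and the observation that any $k$-set with $k\ge 3$ has at most two common neighbours in a planar graph is the right starting point. However, what you have written is a plan rather than a proof. You explicitly flag the crux --- bounding the total contribution of all pairs $\{u,v\}$ with $c(u,v)\ge 3$ by $\binom{n-2}{k}$ via the fan of an extremal pair --- and then do not carry it out. The claim that every copy of $K_{2,k}$ is either inside the extremal fan or confined to a single face's closure is true, but you have not shown how to sum the face contributions, nor how the nesting of fans is controlled, nor how the exceptional cases at $n=6$ emerge from (rather than are appended to) the argument. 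If you want to complete this, the original Alon--Caro paper~\cite{alon} is the place to look; their argument is indeed of this flavour but the inequality you identify as the obstacle requires real work.
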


Other results in this direction include a linear bound on the maximum number of copies of a $3$-connected planar graph by Wormald~\cite{Wormald} and independently Eppstein~\cite{eppstein}.  The exact bound on the maximum number of copies of $K_4$ was obtained independently by Alon and Caro~\cite{alon} and by Wood~\cite{k4}.
Let $P_k$ denote the path on $k$ vertices. It is well-known that $f(n,P_2)=3n-6$, and it follows from Theorem~\ref{tm} that $f(n,P_3)=n^2+3n-16$ for $n\geq 4$. 
The order of magnitude of $f(n, H)$ when $H$ is a fixed tree was determined in~\cite{gen} and for general $H$ (and in arbitrary surfaces) by Huynh,  Joret and Wood~\cite{surfaces} (see also~\cite{general} for results in general sparse settings). In particular, for a path on $k$ vertices, we have $f(n, P_k)= \Theta(n^{{\lfloor{\frac{k-1}{2}}\rfloor}+1})$. 

In this paper we determine $f(n,P_4)$,  the  maximum  number  of  copies  of a path on $4$ vertices possible  in  $n$-vertex  planar graph.  Our main result is the following.

\begin{theorem}\label{thm1}
We have,
\begin{align*}
f(n,P_4)= \begin{cases}
12, &\text{if $n=4$;}\\
147, &\text{if $n=7$;}\\
222, &\text{if $n=8$;}\\
7n^2-32n+27, &\text{if $n=5, 6$ and $n\geq 9$.}
\end{cases}
\end{align*}
For integers $n \in \{4,5,6\}$ and $n\geq 9$, the only $n$-vertex planar graph attaining the value $f(n, P_3)$ is the graph  $F_n$. For $n=7$ and $n=8$ the graphs pictured in Figure~\ref{qq}(B) and~\ref{qq}(C), respectively are the only graphs attaining the value $f(n, P_4)$. 
\label{main}
\end{theorem}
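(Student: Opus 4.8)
\smallskip
\noindent\emph{Proof strategy.}
The plan is to reduce to edge-maximal planar graphs and then to work with an exact formula for $\N(P_4,G)$ in terms of the degree sequence and the triangle count. Counting each copy of $P_4$ through its unique central edge $uv$ --- which extends to $(d(u)-1)(d(v)-1)$ choices of the two pendant vertices, of which exactly $|N(u)\cap N(v)|$ are degenerate --- and using $\sum_{uv\in E}|N(u)\cap N(v)|=3\,\N(C_3,G)$, one obtains
\[
\N(P_4,G)=\sum_{uv\in E(G)}(d(u)-1)(d(v)-1)-3\,\N(C_3,G).
\]
Since inserting an edge into a planar graph (preserving planarity) never destroys a copy of $P_4$, we may assume $G$ is a triangulation, so that $|E(G)|=3n-6$, $\sum_v d(v)=6n-12$, and $2n-4\le\N(C_3,G)\le 3n-8$ by Theorem~\ref{hakimi}. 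Evaluating the identity on $F_n$ (a path $p_1\dots p_{n-2}$ together with two vertices adjacent to every $p_i$ and to each other) gives $\sum_{uv}(d(u)-1)(d(v)-1)=7n^2-23n+3$ and $\N(C_3,F_n)=3n-8$, hence $\N(P_4,F_n)=7n^2-32n+27$, which furnishes the lower bound.

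For the upper bound I would first settle $n\in\{4,5,6,7,8\}$ by a direct, essentially finite analysis (for $n=4$ the only triangulation is $K_4=F_4$, with $12$ copies of $P_4$; for $n=7,8$ one checks that the graphs of Figure~\ref{qq}(B),(C) outperform $F_n$ and are the unique optima --- precisely the regime where $7n^2-32n+27$ is not attained). For $n\ge 9$ the heart of the matter is the structural claim that \emph{an extremal triangulation has two vertices of degree $n-1$}. Planarity enters through the absence of a $K_{3,3}$: three vertices of degree $n-1$ would be pairwise adjacent and share all remaining $n-3$ vertices as common neighbours, a $K_{3,3}$; and if $x,y$ have degree $n-1$, then the remaining $n-2$ vertices (all common neighbours of $x$ and $y$) can span only a linear forest, since a vertex of degree at least $3$ among them together with $x,y$ again forces a $K_{3,3}$, while a cycle among them together with the edge $xy$ exceeds the planar edge bound; a triangulation with two vertices of degree $n-1$ is therefore exactly $F_n$. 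It then remains to rule out every triangulation with at most one vertex of degree $n-1$ --- wheel-type graphs, the ``double wheel'' $C_{n-2}+\overline{K_2}$ (which has only $6n^2-27n+30$ copies of $P_4$), and their relatives. For this I would either run a discharging argument on the triangulation, showing that $\sum_{uv}(d(u)-1)(d(v)-1)-3\N(C_3,G)$ is increased only by concentrating degree onto two mutually adjacent full-degree vertices, or argue by compression: from any triangulation other than $F_n$, exhibit a planarity-preserving edge relocation (or a reinsertion of a low-degree vertex next to a highest-degree one) that strictly increases $\N(P_4,\cdot)$, so that iteration terminates at $F_n$. A convenient auxiliary is the induction $\N(P_4,G)\le f(n-1,P_4)+c(v)$, obtained by deleting a vertex $v$ of degree $3$, $4$, or $5$, where the number $c(v)$ of copies of $P_4$ meeting $v$ is bounded through $\sum_{w\in N(v)}d(w)$ together with the fact that at most two neighbours of $v$ can be hubs; tracking equality throughout then pins down $F_n$ (respectively the Figure~\ref{qq} graphs for $n=7,8$) as the unique extremal graph.

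The hard part will be that $F_n$ is \emph{not} a minimizer of $\N(C_3,G)$ among triangulations --- by Theorem~\ref{hakimi} it is in fact a maximizer --- so the subtracted term $3\N(C_3,G)$ cannot be replaced by its crude minimum $6n-12$ without giving away more than the ``path plus two hubs'' configuration can spare. One must instead account, separating triangle by separating triangle (equivalently, chord by chord in the vertex links), for how each additional triangle simultaneously costs $3$ in the identity and feeds back positively into $\sum_{uv}(d(u)-1)(d(v)-1)$ through the raised degrees, and then show the net balance always favours $F_n$ once $n\ge 9$. Calibrating this trade-off finely enough to identify the unique extremal graph and to isolate the genuine exceptions at $n=4,7,8$ is, I expect, the main obstacle.
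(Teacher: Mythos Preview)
Your counting identity and the lower bound via $F_n$ are both correct and match the paper exactly (this is the paper's Lemma~\ref{lemma1}). You have also correctly identified the central difficulty: $F_n$ is an Apollonian network and hence \emph{maximizes} $\N(C_3,\cdot)$, so the term $-3\N(C_3,G)$ cannot be bounded crudely.

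The gap is that your upper-bound strategy never commits to a single argument and none of the sketched options is close to executable as stated. The structural claim ``an extremal triangulation has two vertices of degree $n-1$'' is true a posteriori, but you give no mechanism to prove it directly; the discharging and compression ideas are left entirely unspecified; and your induction proposal --- delete a vertex of degree $3$, $4$, or $5$ and bound the number $c(v)$ of $P_4$'s through it --- is the right shape but hides serious work. In particular, when $d(v)\in\{4,5\}$ the graph $G\setminus v$ is no longer a triangulation, so one must add back one or two edges and then subtract off the $P_4$'s using those new edges; controlling $c(v)$ minus this correction down to $14n-39$ requires a lengthy case analysis (the paper's authors in fact carried this out and then abandoned it in favour of something cleaner).

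What the paper actually does is bypass the $d(v)\ge 4$ cases entirely with a short self-contained lemma (Lemma~\ref{lemma:d>3}): if $\delta(G)\ge 4$ then $P_4(G)\le 7n^2-36n+50<7n^2-32n+27$. The proof uses only your identity, the bound $\sum_{y\in N(x)}d(y)\le 3d(x)+2n-8$ (which follows from $\delta\ge 4$ and $\sum d=6n-12$), and convexity of $\sum d(x)^2$. This reduces the induction to the single case $\delta(G)=3$, where deleting a degree-$3$ vertex $v$ leaves a triangulation and one needs only $P_4(G,v)\le 14n-39$. That bound is obtained by a structured case split on whether some $u\ne v$ is adjacent to all three neighbours of $v$, and then on how many ``star edges'' (edges from $u$ to a vertex adjacent to two $x_i$'s inside each of the three regions) exist --- not by a global degree-concentration argument. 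The base cases $n\le 9$ are then done by hand, and the equality analysis in the degree-$3$ induction step pins down $F_n$.

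So: keep your Lemma~\ref{lemma1} and the $F_n$ computation, prove the $\delta\ge 4$ lemma to kill those cases outright, and then run the induction only through degree-$3$ vertices. Your last paragraph about balancing separating triangles against degree gains is morally the right intuition, but the paper's route avoids having to make that trade-off quantitative.
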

We also note that the asymptotic value of $f(n,P_5)$ was determined to be $n^3$ in~\cite{chacha}, and Cox and Martin~\cite{ccox} obtained the asymptotic result $f(n,P_7)=\frac{4}{27}n^4+O(n^{4-1/5}).$

\section{Notation and Preliminaries}
Let $G$ be a planar graph. We denote the vertex and the edge sets of $G$ by $V(G)$ and $E(G)$, respectively. For a vertex $v\in V(G)$, $d_G(v)$ denotes the degree of $v$. We omit the subscript whenever there is no ambiguity about which graph we are referring to. We denote by $N(v)$ the set of neighbors of $v$. For two vertices $x,y\in V(G)$, we denote the set of vertices which are adjacent to both $x$ and $y$ by $N(x,y)$. We also denote the size of $N(x,y)$ by $d(x,y)$. The minimum degree of $G$ is denoted by $\delta(G)$. For simplicity, we refer to a path of length three as a \emph{$3$-path}.  We denote the number of $P_4$'s in $G$ by $P_4(G)$. For $x\in V(G)$, the number of $P_4$'s in $G$ containing $x$ is denoted by $P_4(G,x)$. 
We denote the $n$-vertex graph obtained by joining every vertex from a path with $n-2$ vertices to two additional adjacent vertices by $F_n$ (pictured in Figure~\ref{qq}($A$)). 

For any maximal planar graph $G$ on $n$ vertices ($n \ge 3$) it can be shown that $3\leq \delta(G)\leq 5$. Moreover for a vertex $v$ in $V(G)$, if $d(v)=k$ and $N(v)=\{x_1,x_2,x_3,\dots ,x_k\}$, then $N(v)$ induces a unique cycle of length $k$. We may choose a drawing of $G$ so that $v$ is contained in the interior of the cycle. Without loss of generality, we may assume that we have a cycle $C$ with vertex sequence $x_1,x_2,x_3,\dots,x_k,x_1$. Let us denote the edge $\{x_i,v\}$ by $e_i$ for $i=1,2,3,\dots,k$ (see Figure~\ref{cycle}).       
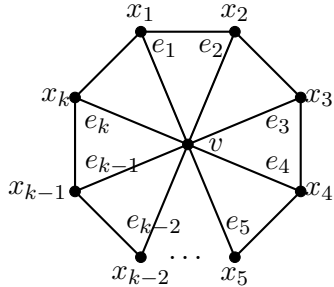
\begin{figure}[h]
\centering
\begin{tikzpicture}[scale=0.25]
\draw[fill=black](-6,-2.5)circle(8pt);
\draw[fill=black](-6,2.5)circle(8pt);
\draw[fill=black](-2.5,6)circle(8pt);
\draw[fill=black](2.5,6)circle(8pt);
\draw[fill=black](6,2.5)circle(8pt);
\draw[fill=black](6,-2.5)circle(8pt);
\draw[fill=black](-2.5,-6)circle(8pt);
\draw[fill=black](2.5,-6)circle(8pt);
\draw[fill=black](0,0)circle(8pt);
\draw (0,-6)node{$\dots$};
\draw[thick](-6,2.5)--(-6,2.5)--(-2.5,6)--(2.5,6)--(6,2.5)--(6,-2.5)--(2.5,-6)(-2.5,-6)--(-6,-2.5)--(-6,2.5);
\draw[thick](0,0)--(-6,-2.5)(0,0)--(-6,2.5) (0,0)--(-2.5,6)(0,0)--(2.5,6)(0,0)--(6,2.5)(0,0)--(6,-2.5)
(0,0)--(2.5,-6) (0,0)--(-2.5,-6);

\draw (0,0) -- (-2.5,6) node[above=8pt,midway]{$e_{1}$};
\draw (0,0) -- (2.5,6) node[above=8pt,midway]{$e_{2}$};
\node at (1.5,0){$v$};
\node at (-2.5,7){$x_1$};
\node at (2.5,7){$x_2$};
\node at (7,2.5){$x_3$};
\node at (7,-2.5){$x_4$};
\node at (2.5,-7){$x_5$};
\node at (-2.5,-7){$x_{k-2}$};
\node at (-8,-2.5){$x_{k-1}$};
\node at (-7,2.5){$x_{k}$};
\node at (2.7,-4.3){$e_{5}$};
\node at (-1.8,-4.3){$e_{k-2}$};
\node at (4.8,1.08){$e_{3}$};
\node at (4.8,-1.08){$e_{4}$};
\node at (-4.8,1.08){$e_{k}$};
\node at (-4,-1.08){$e_{k-1}$};
\end{tikzpicture}
\caption{Neighbors of a vertex $v\in V(G)$ of degree $k$.} 
\label{cycle}
\end{figure} 

We partition the set of $3$-paths containing $v$ into three different classes, depending on the location of their middle edge.

A \textbf{Type-I, $3$-path with respect to a vertex $v$} is a $3$-path which contains an edge $e_i$ as its middle edge (see Figure~\ref{type1}).
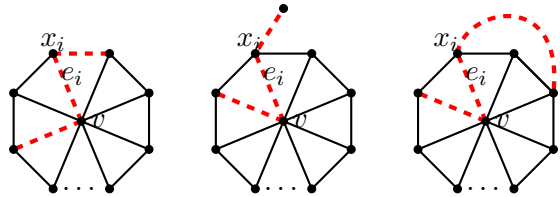
\begin{figure}[h]
\centering
\begin{tikzpicture}[scale=0.15]
\draw[dashed, red, ultra thick](0,0)--(-2.5,6)(0,0)--(-6,-2.5)(-2.5,6)--(2.5,6);
\draw[fill=black](-6,-2.5)circle(10pt);
\draw[fill=black](-6,2.5)circle(10pt);
\draw[fill=black](-2.5,6)circle(10pt);
\draw[fill=black](2.5,6)circle(10pt);
\draw[fill=black](6,2.5)circle(10pt);
\draw[fill=black](6,-2.5)circle(10pt);
\draw[fill=black](-2.5,-6)circle(10pt);
\draw[fill=black](2.5,-6)circle(10pt);
\draw[fill=black](0,0)circle(10pt);
\draw (0,-6)node{$\dots$};
\draw[thick](-6,2.5)--(-6,2.5)--(-2.5,6)(2.5,6)--(6,2.5)--(6,-2.5)--(2.5,-6)(-2.5,-6)--(-6,-2.5)--(-6,2.5);
\draw[thick](0,0)--(-6,2.5) (0,0)--(2.5,6)(0,0)--(6,2.5)(0,0)--(6,-2.5)
(0,0)--(2.5,-6) (0,0)--(-2.5,-6);

\node at (-0.8,4) {$e_i$};
\node at (1.5,0){$v$};
\node at (-2.5,7){$x_i$};

\end{tikzpicture}\qquad
\begin{tikzpicture}[scale=0.15]
\draw[dashed, red, ultra thick](0,0)--(-2.5,6)--(0,10)(0,0)--(-6,2.5);
\draw[fill=black](-6,-2.5)circle(10pt);
\draw[fill=black](-6,2.5)circle(10pt);
\draw[fill=black](-2.5,6)circle(10pt);
\draw[fill=black](2.5,6)circle(10pt);
\draw[fill=black](6,2.5)circle(10pt);
\draw[fill=black](6,-2.5)circle(10pt);
\draw[fill=black](-2.5,-6)circle(10pt);
\draw[fill=black](2.5,-6)circle(10pt);
\draw[fill=black](0,0)circle(10pt);
\draw (0,-6)node{$\dots$};
\draw[fill=black](0,10)circle(10pt);
\draw[thick](-6,2.5)--(-6,2.5)--(-2.5,6)--(2.5,6)--(6,2.5)--(6,-2.5)--(2.5,-6)(-2.5,-6)--(-6,-2.5)--(-6,2.5);
\draw[thick](0,0)--(-6,-2.5) (0,0)--(2.5,6)(0,0)--(6,2.5)(0,0)--(6,-2.5)
(0,0)--(2.5,-6) (0,0)--(-2.5,-6);

\node at (-0.8,4) {$e_i$};
\node at (1.5,0){$v$};
\node at (-3,7){$x_i$};
\end{tikzpicture}\qquad
\begin{tikzpicture}[scale=0.15]
\draw[dashed, red, ultra thick](0,0)--(-2.5,6)(0,0)--(-6,2.5);
\draw[dashed, red,ultra thick](-2.5,6)..controls (-2,10) and (7,12) .. (6,2.5);
\draw[fill=black](-6,-2.5)circle(10pt);
\draw[fill=black](-6,2.5)circle(10pt);
\draw[fill=black](-2.5,6)circle(10pt);
\draw[fill=black](2.5,6)circle(10pt);
\draw[fill=black](6,2.5)circle(10pt);
\draw[fill=black](6,-2.5)circle(10pt);
\draw[fill=black](-2.5,-6)circle(10pt);
\draw[fill=black](2.5,-6)circle(10pt);
\draw[fill=black](0,0)circle(10pt);
\draw (0,-6)node{$\dots$};
\draw[thick](-6,2.5)--(-6,2.5)--(-2.5,6)(2.5,6)--(6,2.5)--(6,-2.5)--(2.5,-6)(-2.5,-6)--(-6,-2.5)--(-6,2.5);
\draw[thick](-2.5,6)--(2.5,6);
\draw[thick](0,0)--(-6,-2.5) (0,0)--(2.5,6)(0,0)--(6,2.5)(0,0)--(6,-2.5)
(0,0)--(2.5,-6) (0,0)--(-2.5,-6)(6,2.5)--(2.5,6);

\node at (-0.8,4) {$e_i$};
\node at (1.5,0){$v$};
\node at (-3.5,7){$x_i$};
\end{tikzpicture}
\caption{Examples of Type-I, $3$-paths.}
\label{type1}
\end{figure}   

A \textbf{Type-II, $3$-path with respect to a vertex $v$} is a $3$-path which starts with vertices $v, x_i, x_j$. Furthermore, if the middle edge is an edge of the cycle $C$, then we call such a $3$-path a {Type-II(A), $3$-path.} Otherwise, we call it a Type-II(B), $3$-path (see Figure~\ref{type2}).

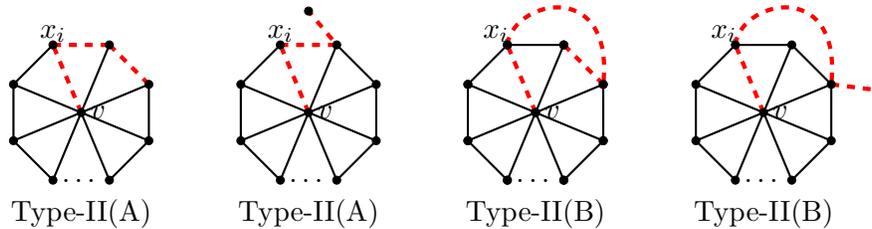
\begin{figure}[h]
\centering
\begin{tikzpicture}[scale=0.15]
\draw[dashed, red, ultra thick](0,0)--(-2.5,6)(-2.5,6)--(2.5,6)(2.5,6)--(6,2.5);
\draw[fill=black](-6,-2.5)circle(10pt);
\draw[fill=black](-6,2.5)circle(10pt);
\draw[fill=black](-2.5,6)circle(10pt);
\draw[fill=black](2.5,6)circle(10pt);
\draw[fill=black](6,2.5)circle(10pt);
\draw[fill=black](6,-2.5)circle(10pt);
\draw[fill=black](-2.5,-6)circle(10pt);
\draw[fill=black](2.5,-6)circle(10pt);
\draw[fill=black](0,0)circle(10pt);
\draw (0,-6)node{$\dots$};
\draw[thick](-6,2.5)--(-6,2.5)--(-2.5,6)(6,2.5)--(6,-2.5)--(2.5,-6)(-2.5,-6)--(-6,-2.5)--(-6,2.5);
\draw[thick](0,0)--(-6,-2.5) (0,0)--(2.5,6)(0,0)--(6,2.5)(0,0)--(6,-2.5) (0,0)--(2.5,-6) (0,0)--(-2.5,-6)(0,0)--(-6,2.5);
\node at (1.5,0){$v$};
\node at (-2.5,7){$x_i$};
\node at (0,-9){Type-II(A)};
\end{tikzpicture} \qquad
\begin{tikzpicture}[scale=0.15]
\draw[dashed, red, ultra thick](0,0)--(-2.5,6)(-2.5,6)--(2.5,6)--(0,9);
\draw[fill=black](-6,-2.5)circle(10pt);
\draw[fill=black](-6,2.5)circle(10pt);
\draw[fill=black](-2.5,6)circle(10pt);
\draw[fill=black](2.5,6)circle(10pt);
\draw[fill=black](6,2.5)circle(10pt);
\draw[fill=black](6,-2.5)circle(10pt);
\draw[fill=black](-2.5,-6)circle(10pt);
\draw[fill=black](2.5,-6)circle(10pt);
\draw[fill=black](0,0)circle(10pt);
\draw (0,-6)node{$\dots$};
\draw[fill=black](0,9)circle(10pt);
\draw[thick](-6,2.5)--(-6,2.5)--(-2.5,6)(2.5,6)--(6,2.5)--(6,-2.5)--(2.5,-6)(-2.5,-6)--(-6,-2.5)--(-6,2.5);
\draw[thick](0,0)--(-6,-2.5) (0,0)--(2.5,6)(0,0)--(6,2.5)(0,0)--(6,-2.5)
(0,0)--(2.5,-6) (0,0)--(-2.5,-6)(0,0)--(-6,2.5);
\node at (1.5,0){$v$};
\node at (-2.5,7){$x_i$};
\node at (0,-9){Type-II(A)};
\end{tikzpicture} \qquad
\begin{tikzpicture}[scale=0.15]
\draw[dashed, red,ultra thick](-2.5,6)..controls (-2,10) and (7,12)  .. (6,2.5);
\draw[dashed, red, ultra thick](0,0)--(-2.5,6);
\draw[fill=black](-6,-2.5)circle(10pt);
\draw[fill=black](-6,2.5)circle(10pt);
\draw[fill=black](-2.5,6)circle(10pt);
\draw[fill=black](2.5,6)circle(10pt);
\draw[fill=black](6,2.5)circle(10pt);
\draw[fill=black](6,-2.5)circle(10pt);
\draw[fill=black](-2.5,-6)circle(10pt);
\draw[fill=black](2.5,-6)circle(10pt);
\draw[fill=black](0,0)circle(10pt);
\draw (0,-6)node{$\dots$};
\draw[thick](-6,2.5)--(-6,2.5)--(-2.5,6)(6,2.5)--(6,-2.5)--(2.5,-6)(-2.5,-6)--(-6,-2.5)--(-6,2.5);
\draw[thick](-2.5,6)--(2.5,6);
\draw[thick](0,0)--(-6,-2.5) (0,0)--(2.5,6)(0,0)--(6,2.5)(0,0)--(6,-2.5)
(0,0)--(2.5,-6) (0,0)--(-2.5,-6)(0,0)--(-6,2.5);

\draw[fill=black](2.5,6)circle(6pt);
\draw[fill=black](6,2.5)circle(6pt);
\node at (1.5,0){$v$};
\node at (-3.5,7){$x_i$};
\node at (0,-9){Type-II(B)};
\draw[dashed, red, ultra thick] (6,2.5)--(2.5,6);
\draw[fill=black](-6,-2.5)circle(10pt);
\draw[fill=black](-6,2.5)circle(10pt);
\draw[fill=black](-2.5,6)circle(10pt);
\draw[fill=black](2.5,6)circle(10pt);
\draw[fill=black](6,2.5)circle(10pt);
\draw[fill=black](6,-2.5)circle(10pt);
\draw[fill=black](-2.5,-6)circle(10pt);
\draw[fill=black](2.5,-6)circle(10pt);
\draw[fill=black](0,0)circle(10pt);
\end{tikzpicture} \qquad
\begin{tikzpicture}[scale=0.15]
\draw[dashed, red, ultra thick](0,0)--(-2.5,6)(6,2.5)--(10,2);
\draw[dashed, red,ultra thick](-2.5,6)..controls (-2,10) and (7,12)  .. (6,2.5);
\draw[fill=black](-6,-2.5)circle(10pt);
\draw[fill=black](-6,2.5)circle(10pt);
\draw[fill=black](-2.5,6)circle(10pt);
\draw[fill=black](2.5,6)circle(10pt);
\draw[fill=black](6,2.5)circle(10pt);
\draw[fill=black](6,-2.5)circle(10pt);
\draw[fill=black](-2.5,-6)circle(10pt);
\draw[fill=black](2.5,-6)circle(10pt);
\draw[fill=black](0,0)circle(10pt);
\draw (0,-6)node{$\dots$};
\draw[fill=black](10,2)circle(10pt);
\draw[thick](-6,2.5)--(-6,2.5)--(-2.5,6)(2.5,6)--(6,2.5)--(6,-2.5)--(2.5,-6)(-2.5,-6)--(-6,-2.5)--(-6,2.5);
\draw[thick](-2.5,6)--(2.5,6);
\draw[thick](0,0)--(-6,-2.5) (0,0)--(2.5,6)(0,0)--(6,2.5)(0,0)--(6,-2.5)
(0,0)--(2.5,-6) (0,0)--(-2.5,-6)(0,0)--(-6,2.5);

\draw[fill=black](10,2)circle(0.8pt);
\draw[fill=black](6,2.5)circle(6pt);
\node at (1.5,0){$v$};
\node at (-3.5,7){$x_i$};
\node at (0,-9){Type-II(B)};
\end{tikzpicture}
\caption{Examples of Type-II, $3$-paths.}
\label{type2}
\end{figure}

A \textbf{Type-III, $3$-path with respect to a vertex $v$} is a $3$-path which starts at the vertex $v$ such that its middle edge connects a vertex from $N(v)$ to a vertex from $V(G)\setminus (N(v) \cup \{v\})$. Furthermore, if the last vertex is not from $N(v)$, then we call such a $3$-path a Type-III(A), $3$-path. Otherwise, we call it a Type-III(B), $3$-path (see Figure~\ref{type3}). 

\begin{figure}[h]
\centering
\begin{tikzpicture}[scale=0.15]
\draw[dashed, red, ultra thick](0,0)--(-2.5,6)--(4,9.6)--(5,6);
\draw[fill=black](-6,-2.5)circle(10pt);
\draw[fill=black](-6,2.5)circle(10pt);
\draw[fill=black](-2.5,6)circle(10pt);
\draw[fill=black](2.5,6)circle(10pt);
\draw[fill=black](6,2.5)circle(10pt);
\draw[fill=black](6,-2.5)circle(10pt);
\draw[fill=black](-2.5,-6)circle(10pt);
\draw[fill=black](2.5,-6)circle(10pt);
\draw[fill=black](0,0)circle(10pt);
\draw (0,-6)node{$\dots$};
\draw[fill=black](4,9.6)circle(10pt);
\draw[fill=black](5,6)circle(10pt);
\draw[thick](-6,2.5)--(-6,2.5)--(-2.5,6)--(2.5,6)--(6,2.5)--(6,-2.5)--(2.5,-6)(-2.5,-6)--(-6,-2.5)--(-6,2.5);
\draw[thick](0,0)--(-6,-2.5) (0,0)--(2.5,6)(0,0)--(6,2.5)(0,0)--(6,-2.5)
(0,0)--(2.5,-6) (0,0)--(-2.5,-6)(0,0)--(-6,2.5);
\node at (1.5,0){$v$};
\node at (-2.5,7){$x_1$};
\node at(0,-9){Type-III(A)};
\end{tikzpicture}\qquad
\begin{tikzpicture}[scale=0.15]
\draw[dashed, red, ultra thick](0,0)--(-2.5,6)--(4,9.6)--(6,2.5);
\draw[fill=black](-6,-2.5)circle(10pt);
\draw[fill=black](-6,2.5)circle(10pt);
\draw[fill=black](-2.5,6)circle(10pt);
\draw[fill=black](2.5,6)circle(10pt);
\draw[fill=black](6,2.5)circle(10pt);
\draw[fill=black](6,-2.5)circle(10pt);
\draw[fill=black](-2.5,-6)circle(10pt);
\draw[fill=black](2.5,-6)circle(10pt);
\draw[fill=black](0,0)circle(10pt);
\draw (0,-6)node{$\dots$};
\draw[fill=black](4,9.6)circle(10pt);
\draw[thick](-6,2.5)--(-6,2.5)--(-2.5,6)--(2.5,6)--(6,2.5)--(6,-2.5)--(2.5,-6)(-2.5,-6)--(-6,-2.5)--(-6,2.5);
\draw[thick](0,0)--(-6,-2.5) (0,0)--(2.5,6)(0,0)--(6,2.5)(0,0)--(6,-2.5)
(0,0)--(2.5,-6) (0,0)--(-2.5,-6)(0,0)--(-6,2.5);
\node at (1.5,0){$v$};
\node at (-2.5,7){$x_1$};
\node at(0,-9){Type-III(B)};
\end{tikzpicture}
\caption{Examples of Type-III, $3$-paths.}
\label{type3}
\end{figure}
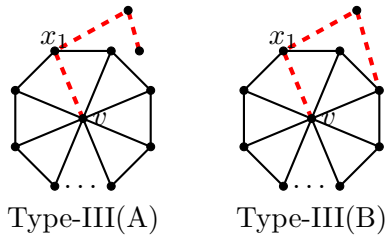

It is easy to see that each of the $3$-paths containing the vertex $v$ is in exactly one of the three classes which we have defined. For simplicity, we will sometimes write Type-(I), (II), (III), {$3$-path} instead of Type-(I), (II), (III), $3$-path with respect to a vertex $v$, when the vertex under consideration is clear.

We will use the following two lemmas in our proof of the main theorem.  The first lemma gives the number of $3$-paths in a given graph $G$. 
\begin{lemma}\label{lemma1}
For a graph $G$, the number of $3$-paths in $G$ is
$$P_4(G)=\sum\limits_{\{x,y\}\in E(G)}(d(x)-1)(d(y)-1)-3\N(C_3,G).$$
\end{lemma}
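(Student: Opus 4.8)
The plan is to count ordered 3-paths (i.e., paths on four vertices with a chosen direction) and then divide by two, while handling carefully the degenerate situation in which the four vertices of a walk are not all distinct.

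First I would fix a graph $G$ and consider all walks of the form $a\text{-}x\text{-}y\text{-}b$ in which $\{x,y\}\in E(G)$, $a\in N(x)\setminus\{y\}$, and $b\in N(y)\setminus\{x\}$. For a fixed oriented edge, the number of such walks is exactly $(d(x)-1)(d(y)-1)$; summing over the unordered edge set $\{x,y\}$ (which already accounts for the two orientations, since swapping $a\leftrightarrow b$ and $x\leftrightarrow y$ gives the same unordered edge with the roles reversed) produces the term $\sum_{\{x,y\}\in E(G)}(d(x)-1)(d(y)-1)$. Every genuine (vertex-distinct) copy of $P_4$ is counted exactly once in this sum: a path $a\text{-}x\text{-}y\text{-}b$ has a unique middle edge $\{x,y\}$, and the ordered choice of its two endpoints relative to that middle edge is forced once we record the unordered pair $\{x,y\}$ together with which of the two middle vertices each endpoint attaches to. So the only issue is overcounting from walks that are not paths.

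The key step is therefore to identify exactly which degenerate walks $a\text{-}x\text{-}y\text{-}b$ (with $x\neq y$, $a\neq y$, $b\neq x$) are being counted and with what multiplicity. Since $x\neq y$, $a\ne y$, and $b\ne x$ already, the only possible coincidence is $a=b$. In that case $a$ is adjacent to both $x$ and $y$, and $\{x,y\}\in E(G)$, so $\{a,x,y\}$ spans a triangle; conversely every triangle $\{a,x,y\}$ gives rise to exactly such degenerate walks. I would count these: for a fixed triangle on vertices $\{u,v,w\}$, the degenerate walks it produces are obtained by choosing which vertex plays the role of the repeated endpoint $a$ and which of the remaining two is $x$ (the other being $y$); this gives $3\cdot 2=6$ ordered degenerate walks per triangle, hence $3$ unordered ones. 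Thus subtracting $3\,\N(C_3,G)$ removes exactly the contribution of all non-path walks, and the identity follows.

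I do not expect a serious obstacle here; the entire content is the bijective bookkeeping. The one point that needs to be stated cleanly is the claim that, aside from the case $a=b$, the constraints $a\in N(x)\setminus\{y\}$ and $b\in N(y)\setminus\{x\}$ already guarantee that $a,x,y,b$ are pairwise distinct, so that no further degeneracies (such as a vertex repeating in a nonadjacent position) can occur. Once that is observed, the proof is a two-line inclusion: the edge-sum counts (ordered, then halved) every $P_4$ once plus every triangle three times, and the correction term $-3\N(C_3,G)$ exactly cancels the triangles.
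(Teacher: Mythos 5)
Your proposal is correct and follows essentially the same route as the paper: count, for each edge $\{x,y\}$ taken as the middle edge, the $(d(x)-1)(d(y)-1)$ choices of end-vertices, and observe that the only degenerate configurations are those with coinciding endpoints, which correspond to triangles through $\{x,y\}$ and contribute $3$ per triangle in total (the paper phrases this as subtracting $d(x,y)$ per edge and summing). The bookkeeping in your argument, including the observation that $a=b$ is the only possible coincidence, matches the paper's proof.
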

\begin{proof}
Consider an edge $\{x,y\}\in E(G)$ and count the number of 3-paths containing $x$ as the second and and $y$ the third vertex of the 3-path. There are $d(x)-1$ possibilities to choose the first vertex and $d(y)-1$ possibilities to choose the last vertex of the path. Since the first and the last vertex of the 3-path need to be different, from the total number of $(d(x)-1)(d(y)-1)$ possibilities we need to subtract the number of triangles containing the edge $\{x,y\}$, which is $d(x,y)$. 

Therefore,
$$P_4(G) = \sum_{\{x,y\}\in E(G)}\left((d(x)-1)(d(y)-1)-d(x,y)\right) = \sum_{\{x,y\}\in E(G)}(d(x)-1)(d(y)-1)-3\N(C_3,G),$$
as each triangle is counted 3 times in the sum.
This completes the proof of Lemma~\ref{lemma1}.
\end{proof}

With this lemma we can prove the following lemma.
\begin{lemma} \label{lemma:d>3}
For every $n$-vertex planar graph $G$ with $\delta(G)\geq 4$ we have   
\[
P_4(G)< 7n^2-36n+50.
\]
\end{lemma}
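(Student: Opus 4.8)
The plan is to apply Lemma~\ref{lemma1}, which gives
\[
P_4(G)=\sum_{\{x,y\}\in E(G)}(d(x)-1)(d(y)-1)-3\N(C_3,G),
\]
and to bound each of the two terms separately. Since $\N(C_3,G)\ge 0$, the second term only helps us, so the real work is bounding $S(G):=\sum_{\{x,y\}\in E(G)}(d(x)-1)(d(y)-1)$ from above over all $n$-vertex planar graphs with $\delta(G)\ge 4$. First I would reduce to the case where $G$ is edge-maximal planar (a triangulation): adding edges only increases degrees and hence increases $S(G)$, and since $\N(C_3,G)$ could also increase we must be a little careful, but the cleanest route is to instead bound $P_4(G)$ directly by $S(G)$ and then argue that $S(G)$ is maximized on a triangulation; alternatively, since the claimed bound is strict and has slack, one can afford to work with $S(G)$ for an arbitrary planar graph. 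A triangulation on $n$ vertices has exactly $3n-6$ edges and $\sum_v d(v)=6n-12$.

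The key step is to bound $\sum_{\{x,y\}\in E}(d(x)-1)(d(y)-1)$. Write $d(x)-1=d(x)-1$ and expand: $\sum_{\{x,y\}\in E}(d(x)-1)(d(y)-1)=\sum_{\{x,y\}\in E}d(x)d(y) - \sum_{\{x,y\}\in E}(d(x)+d(y)) + |E| = \sum_{\{x,y\}\in E}d(x)d(y) - \sum_v d(v)^2 + |E|$. So everything comes down to bounding $\sum_{\{x,y\}\in E}d(x)d(y)$ and handling $-\sum_v d(v)^2$. For the planar graph, the standard tool is that $\sum_{\{x,y\}\in E}\min(d(x),d(y))$ is $O(n)$ for planar graphs (a consequence of planarity / Nash-Williams arboricity: planar graphs decompose into $3$ forests), which gives $\sum_{\{x,y\}\in E}d(x)d(y)\le \sum_{\{x,y\}\in E}d(x)d(y)$; more usefully, one bounds $\sum_{\{x,y\}\in E} d(x)d(y)$ via the inequality $d(x)d(y)\le \tfrac12(d(x)^2+d(y)^2)$ is too lossy, so instead I would use the sharper combinatorial fact that in a planar graph $\sum_{\{x,y\}\in E}(d(x)+d(y)) = \sum_v d(v)^2$ together with a convexity/majorization argument: subject to $\delta\ge 4$, $|E|=3n-6$, and planarity, the sum of pairwise degree products along edges is maximized when the degree sequence is as unbalanced as possible — essentially a few vertices of degree $\approx n-1$ adjacent to many vertices of degree $4$, which is exactly the structure of $F_n$ and explains the $7n^2$ leading term. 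Concretely, in $F_n$ two dominating vertices of degree $n-1$ are joined to each other and to a path; each of those contributes $(n-2)^2$ from its incident edges to the two high-degree-times-low-degree products, and summing gives the $7n^2-36n+O(1)$ shape.

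The main obstacle will be making the majorization/optimization argument rigorous: one must show that among all planar degree sequences with $\delta\ge 4$ the quantity $S(G)$ (or $S(G)-3\N(C_3,G)$) cannot exceed $7n^2-36n+50$, and that it is strictly less. I expect to do this by a discharging or local-exchange argument: assume $G$ is a triangulation with $\delta\ge4$ maximizing $P_4(G)$, and show that at most two vertices can have large degree, because three vertices of degree $\Omega(n)$ in a planar graph would force a $K_{3,3}$-like structure or too many edges; then bound the contribution of all remaining (bounded-degree, since they need not be bounded — but their total degree is controlled) vertices. The delicate part is that $\delta\ge 4$ forbids degree-$3$ vertices, which is what makes $F_n$ (with its degree-$4$ path vertices) optimal rather than the Apollonian-type networks; I would track the exact constant through the two dominating vertices ($2\cdot(n-1)$ contributing), the path of $n-2$ vertices of degree $4$, and the two edges among high-degree vertices, then verify the resulting polynomial is $\le 7n^2-36n+50$ with the strict inequality coming from the $-3\N(C_3,G)$ term (a triangulation has at least $2n-4$ triangles, and we only subtracted enough to land strictly below the target). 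Once the triangulation case is settled, the general $\delta\ge4$ case follows since deleting edges from a non-triangulation to reach one with $\delta\ge4$ is not always possible, so instead I would note directly that any planar $G$ with $\delta\ge4$ embeds in a triangulation $G'$ on the same vertex set with $\delta(G')\ge4$ and $P_4(G)\le S(G)\le S(G')$, closing the argument.
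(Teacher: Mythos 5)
Your starting moves match the paper's: apply Lemma~\ref{lemma1}, reduce to a maximal planar graph with $3n-6$ edges, and exploit the $2n-4$ triangular faces. But the core of your argument --- bounding $\sum_{\{x,y\}\in E(G)}d(x)d(y)$ by a ``majorization/local-exchange'' principle saying the sum is maximized by an unbalanced degree sequence with at most two high-degree vertices --- is exactly the step you leave unproved, and the justification you sketch for it is not sound. Three vertices of degree $\Omega(n)$ do \emph{not} force a $K_{3,3}$-subdivision or violate planarity: a planar graph can have three vertices of degree roughly $2n/3$ each (indeed the paper elsewhere uses the bound $d(x_1)+d(x_2)+d(x_3)\le 2n+2$ for any three vertices of a planar graph, which is attainable up to constants). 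Likewise, computing $S$ and $P_4$ on the conjectured extremal $F_n$-like configuration verifies the value of the polynomial but gives no upper bound over all planar graphs with $\delta\ge 4$; the entire difficulty of the lemma is that upper bound, and ``discharging or local exchange'' is named but not carried out. The paper avoids this optimization altogether with one elementary pointwise inequality that your proposal is missing: since $\delta(G)\ge 4$ and $\sum_v d(v)=6n-12$, every vertex $x$ satisfies $\sum_{y\in N(x)}d(y)\le 6n-12-d(x)-4\bigl(n-1-d(x)\bigr)=3d(x)+2n-8$. Substituting this into $\tfrac12\sum_x (d(x)-1)\bigl(\sum_{y\in N(x)}d(y)-d(x)\bigr)$ linearizes the edge sum, leaving only $\sum_x d^2(x)$ to be bounded by convexity (degree sequence majorized by $(n-1,n-3,4,\dots,4)$), which yields $7n^2-36n+50$ directly. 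Some analogue of this step, or a genuinely rigorous replacement, is indispensable; without it your proof does not go through.

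A secondary inconsistency: you first discard the $-3\N(C_3,G)$ term (``it only helps us'') and later invoke it for strictness. It is not merely cosmetic --- the paper's final constant relies on subtracting $3(2n-4)=6n-12$; if you bound only $S(G)=\sum_{\{x,y\}\in E}(d(x)-1)(d(y)-1)$, the same degree-sequence analysis gives only $S(G)\le 7n^2-30n+38$, which does not imply the stated $7n^2-36n+50$. So the triangle count must be kept throughout, not reintroduced at the end as an afterthought. The reduction to a triangulation itself is fine (adding edges preserves $\delta\ge4$ and only creates $P_4$'s), and is the one place where your reasoning and the paper's coincide in substance.
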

\begin{proof}
Without loss of generality we may assume that $G$ is a maximal planar graph with $3n-6$ edges and $2n-4$ triangular faces. In particular it contains at least $2n-4$ triangles. 

From Lemma~\ref{lemma1} the total number of 3-paths in $G$ is equal to
\begin{align*}
    P_4(G) &= \sum_{\{x,y\}\in E(G)} (d(x)-1)(d(y)-1)-3\N(C_3,G)\\
    &\leq \sum_{\{x,y\}\in E(G)} (d(x)-1)(d(y)-1)-3(2n-4)\\
    &= \frac{1}{2}\sum_{x\in V(G)}
     (d(x)-1) \left( \sum_{y\in N(x)}  d(y)-d(x) \right)-6n+12.
\end{align*}

Since $\delta(G)\geq 4$ and the sum of the degrees of all the vertices is equal to $2e(G)=6n-12$, for each vertex $x$ we have
\[
\sum_{y\in N(x)}d(y)=6n-12-d(x)-\sum_{y\notin N[x]}d(y)\leq 6n-12-d(x)-4(n-1-d(x))=3d(x)+2n-8.
\]

This gives us the following bound
\begin{align*}
     P_4(G) &\leq \frac{1}{2}\sum_{x\in V(G)} (d(x)-1) \left( 2d(x)+2n-8 \right)-6n+12 \\ 
     &= \sum_{x\in V(G)}d^2(x)+(n-5)\sum_{x\in V(G)}d(x)-n(n-4)-6n+12\\
     &\leq \left((n-1)^2+(n-3)^2+4^2(n-2)\right)+(n-5)(6n-12) -n^2-2n+12\\
     &= 7n^2-36n+50,
\end{align*}
where the last inequality comes from convexity.

It remains to notice that, since $\delta(G)\geq 4$ and $G$ is a planar graph, we have $n\geq 6$, hence $7n^2-36n+50<7n^2-32n+27$.
\end{proof}

\section{Proof of Theorem~\ref{thm1}}
We are going to prove the theorem by induction on the number of vertices. 
The base cases, when $n\leq 9$, will be discussed later. 

Let $G$ be a planar graph on $n$ vertices. Then we have $3\leq \delta(G)\leq 5$.
From Lemma~\ref{lemma:d>3} we may assume $\delta(G)=3$. We are going to prove the rest by induction, after removing a vertex of degree $3$.


Let $v$ be a vertex of degree $3$ and $N(v)=\{x_1, x_2, x_3\}$. 
Our goal is to show that $P_4(G,v) \leq 14n-39$. 
Indeed, by deleting the vertex $v$ we obtain a maximal planar graph $G'$ on $(n-1)$ vertices, and by the induction hypothesis we have $P_4(G') \leq 7(n-1)^2-32(n-1)+27$. 
Therefore, \[P_4(G) \leq 7(n-1)^2-32(n-1)+27+14n-39=7n^2-32n+27.\]  

Notice that the vertices $x_1,x_2,x_3$ induce a triangle. Denote the edges $\{x_i, v\}$ by $e_i$, $i\in\{1,2,3\}$. The number of Type-I, $3$-paths with $e_i$ in the middle is $2d(x_i)-4$ for all $i\in\{1,2,3\}$. Thus we have $2\sum_{i=1}^3d(x_i)-12$
Type-I, $3$-paths. The number of Type-II, $3$-paths  starting at $v$ and continuing to a vertex $x_i$, $i\in\{1,2,3\}$, is $d(x_1)+d(x_2)+d(x_3) - d(x_i) -4$. Thus, we have $2\sum_{i=1}^3d(x_i)-12$ Type-II, $3$-paths. It remains to count the number of Type-III, $3$-paths with respect to  the vertex~$v$. For this we need to consider two subcases.

\subsubsection*{Case 1.1: $N(x_1)\cap N(x_2)\cap N(x_3)=\{v\}$.}
For each edge $e$ which is not incident to the triangle, we can have at most four Type-III(A), $3$-paths with respect to the vertex $v$ (see Figure~\ref{4type}). Since there are at most $(3n-6)-\Big(\sum_{i=1}^3 d(x_i)-3\Big)$ such edges which are not incident to the triangle, it follows that the number of Type-III(A), $3$-paths  is at most 
$4\Big(3n-6-\big(\sum_{i=1}^3 d(x_i)-3\big)\Big).$

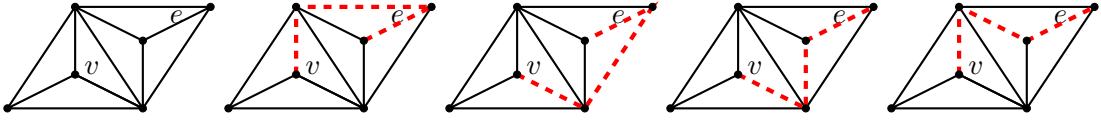
\begin{figure}[h]
\centering
\begin{tikzpicture}[scale=0.45]
\draw[fill=black] (0,0) circle (3pt);
\draw[fill=black] (4,0) circle (3pt);
\draw[fill=black] (2,1) circle (3pt);
\draw[fill=black] (2,3) circle (3pt);
\draw[fill=black] (6,3) circle (3pt);
\draw[fill=black] (4,2) circle (3pt);
\node at (2.5,1.2) {$v$};
\node at (5,2.7) {$e$};
\draw[thick] (0,0) -- (4,0) -- (2,1)--(0,0);
\draw[thick] (2,3) -- (4,0) -- (2,1)--(2,3);
\draw[thick](0,0) -- (2,3);
\draw[black,thick]((4,0)--(4,2)--(6,3)--(2,3)--(4,2)(6,3)--(4,0);
\end{tikzpicture}
\begin{tikzpicture}[scale=0.45]
\node at (2.5,1.2) {$v$};
\node at (5,2.7) {$e$};
\draw[thick] (0,0) -- (4,0) -- (2,1)--(0,0);
\draw[thick] (2,3) -- (4,0) -- (2,1);
\draw[thick](0,0) -- (2,3);
\draw[black,thick](4,0)--(4,2)(2,3)--(4,2)(6,3)--(4,0);
\draw[dashed, red,ultra thick](2,1)--(2,3)--(6,3)--(4,2);
\draw[fill=black] (0,0) circle (3pt);
\draw[fill=black] (4,0) circle (3pt);
\draw[fill=black] (2,1) circle (3pt);
\draw[fill=black] (2,3) circle (3pt);
\draw[fill=black] (6,3) circle (3pt);
\draw[fill=black] (4,2) circle (3pt);
\end{tikzpicture}
\begin{tikzpicture}[scale=0.45]
\node at (2.5,1.2) {$v$};
\node at (5,2.7) {$e$};
\draw[thick] (0,0) -- (4,0)  (2,1)--(0,0);
\draw[thick] (2,3) -- (4,0) (2,1)--(2,3);
\draw[thick](0,0) -- (2,3);
\draw[black,thick](4,0)--(4,2)(6,3)--(2,3)--(4,2);
\draw[dashed, red,ultra thick](2,1)--(4,0)--(6,3)--(4,2);
\draw[fill=black] (0,0) circle (3pt);
\draw[fill=black] (4,0) circle (3pt);
\draw[fill=black] (2,1) circle (3pt);
\draw[fill=black] (2,3) circle (3pt);
\draw[fill=black] (6,3) circle (3pt);
\draw[fill=black] (4,2) circle (3pt);
\end{tikzpicture}
\begin{tikzpicture}[scale=0.45]
\node at (2.5,1.2) {$v$};
\node at (5,2.7) {$e$};
\draw[thick] (0,0) -- (4,0) (2,1)--(0,0);
\draw[thick] (2,3) -- (4,0) (2,1)--(2,3);
\draw[thick](0,0) -- (2,3);
\draw[black,thick](6,3)--(2,3)--(4,2)(6,3)--(4,0);
\draw[dashed, red,ultra thick](2,1)--(4,0)--(4,2)--(6,3);
\draw[fill=black] (0,0) circle (3pt);
\draw[fill=black] (4,0) circle (3pt);
\draw[fill=black] (2,1) circle (3pt);
\draw[fill=black] (2,3) circle (3pt);
\draw[fill=black] (6,3) circle (3pt);
\draw[fill=black] (4,2) circle (3pt);
\end{tikzpicture}
\begin{tikzpicture}[scale=0.45]
\node at (2.5,1.2) {$v$};
\node at (5,2.7) {$e$};
\draw[thick] (0,0) -- (4,0) -- (2,1)--(0,0);
\draw[thick] (2,3) -- (4,0) -- (2,1);
\draw[thick](0,0) -- (2,3);
\draw[black,thick](4,0)--(4,2)(6,3)--(2,3)(6,3)--(4,0);
\draw[dashed, red,ultra thick](2,1)--(2,3)--(4,2)--(6,3);
\draw[fill=black] (0,0) circle (3pt);
\draw[fill=black] (4,0) circle (3pt);
\draw[fill=black] (2,1) circle (3pt);
\draw[fill=black] (2,3) circle (3pt);
\draw[fill=black] (6,3) circle (3pt);
\draw[fill=black] (4,2) circle (3pt);
\end{tikzpicture}
\caption{Four Type-III(A), $3$-paths for a fixed edge $e$.}
\label{4type}
\end{figure}
The remaining $3$-paths are Type-III(B), $3$-paths. Recall that in this case each vertex $v'\neq v$ can be adjacent to at most 2 vertices of the triangle induced by $N(v)$. Thus for each such vertex $v'$, $v'\notin \{x_1, x_2, x_3, v\}$, we have at most two Type-III(B), $3$-paths (see Figure~\ref{2type}). Hence we have at most $2(n-4)$ Type-III(B), $3$-paths. Thus we get,
\[P_4(G,v)\leq 4\sum_{i=1}^3d(x_i)-24+4\Big(3n-3-\sum_{i=1}^3d(x_i)\Big)+2(n-4)=14n-44.\]
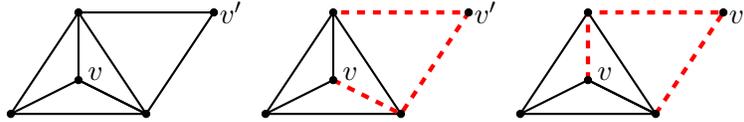
\begin{figure}[h]
\centering
\begin{tikzpicture}[scale=0.45]
\node at (2.5,1.2) {$v$};
\node at (6.5,3) {$v'$};
\draw[thick] (0,0) -- (4,0) -- (2,1)--(0,0);
\draw[thick] (2,3) -- (4,0) -- (2,1)--(2,3);
\draw[thick](0,0) -- (2,3);
\draw[black,thick]((4,0)--(6,3)--(2,3);
\draw[fill=black] (0,0) circle (3pt);
\draw[fill=black] (4,0) circle (3pt);
\draw[fill=black] (2,1) circle (3pt);
\draw[fill=black] (2,3) circle (3pt);
\draw[fill=black] (6,3) circle (3pt);
\end{tikzpicture}
\begin{tikzpicture}[scale=0.45]
\node at (2.5,1.2) {$v$};
\node at (6.5,3) {$v'$};
\draw[thick] (0,0) -- (4,0) (2,1)--(0,0);
\draw[thick] (2,3) -- (4,0) (2,1)--(2,3);
\draw[thick](0,0) -- (2,3);
\draw[dashed, red,ultra thick](2,1)--(4,0)--(6,3)--(2,3);
\draw[fill=black] (0,0) circle (3pt);
\draw[fill=black] (4,0) circle (3pt);
\draw[fill=black] (2,1) circle (3pt);
\draw[fill=black] (2,3) circle (3pt);
\draw[fill=black] (6,3) circle (3pt);
\end{tikzpicture}
\begin{tikzpicture}[scale=0.45]
\node at (2.5,1.2) {$v$};
\node at (6.5,3) {$v'$};
\draw[thick] (0,0) -- (4,0) -- (2,1)--(0,0);
\draw[thick] (2,3) -- (4,0) -- (2,1);
\draw[thick](0,0) -- (2,3);
\draw[dashed, red,ultra thick](4,0)--(6,3)--(2,3)--(2,1);
\draw[fill=black] (0,0) circle (3pt);
\draw[fill=black] (4,0) circle (3pt);
\draw[fill=black] (2,1) circle (3pt);
\draw[fill=black] (2,3) circle (3pt);
\draw[fill=black] (6,3) circle (3pt);
\end{tikzpicture}
\caption{Type-III(B), $3$-paths for a fixed vertex $v'$, $v'\notin \{v, x_1, x_2, x_3\}$.}
\label{2type}
\end{figure}

Therefore, $P_4(G,v)\leq 14n-44 < 14n-39$ and we have no extremal graph in this case. 

\subsubsection*{Case 1.2: There exists a vertex $u$, $u\neq v$, such that $N(x_1)\cap N(x_2)\cap N(x_3)=\{v,u\}$.}
We consider the three regions formed by vertices $u, x_1, x_2$ and $x_3$. Let the region defined by the vertices $u, x_1$ and $x_2$ which does not contain $x_3$ be $R_1$, the region defined by the vertices $u,x_2$ and $x_3$ which does not contain $x_1$ be $R_2$, and lastly the region defined by the vertices $u,x_1$ and $x_3$ and not containing $x_2$ be $R_3$, as shown in Figure~\ref{regions}.
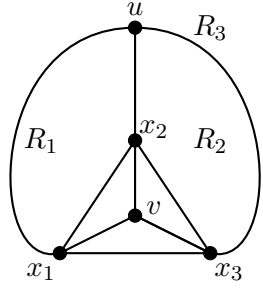
\begin{figure}[h]
\centering
\begin{tikzpicture}[scale=0.5]
\draw[fill=black] (0,0) circle (5pt);
\draw[fill=black] (4,0) circle (5pt);
\draw[fill=black] (2,1) circle (5pt);
\draw[fill=black] (2,3) circle (5pt);
\draw[fill=black] (2,6) circle (5pt);
\node at (-0.5,-0.5) {$x_1$};
\node at (4.5,-0.5) {$x_3$};
\node at (2.5,1.2) {$v$};
\node at (2.5,3.3) {$x_2$};
\node at (2,6.5) {$u$};
\node at (-0.5,3) {$R_1$};
\node at (4,3) {$R_2$};
\node at (4,6) {$R_3$};
\draw[thick] (0,0) -- (4,0) -- (2,1)--(0,0);
\draw[thick] (2,3) -- (4,0) -- (2,1)--(2,3);
\draw[thick](0,0) -- (2,3);
\draw[thick](2,3)--(2,6);
\draw[black,thick](0,0)..controls (-2,-0.5) and (-2,6) .. (2,6);
\draw[black,thick](4,0)..controls (6,-0.5) and (6,6) .. (2,6);
\end{tikzpicture}
\caption{Three regions formed by the vertex $u$ and the vertices of the triangle.}
\label{regions}
\end{figure}

From the planarity of $G$, notice that there is at most one edge $e_1$ with end vertices $u$ and $y_1$ such that $y_1$ lies inside the region $R_1$ and $y_1$ is adjacent to both $x_1$ and $x_2$. Similarly there is at most one edge $e_2$ and $e_3$ with respect to the regions $R_2$ and $R_3$ respectively meeting the conditions stated for $e_1$. We refer to the edges $e_1$, $e_2$ and $e_3$ as \textit{star edges} of $G$ with respect to the vertex $v$.

Take an edge $e$ such that $V(e) \cap \{x_1, x_2, x_3\}=\emptyset$. Then there are at most five  Type-III(A), $3$-paths with respect to the vertex $v$, containing the edge $e$, since $G$ is planar. Furthermore, for each star edge (if one exists) in the three regions there are five Type-III(A), $3$-paths. 
Figure~\ref{fivepaths} shows an edge $e$ in region $R_1$ with all five possible $3$-paths of this kind.

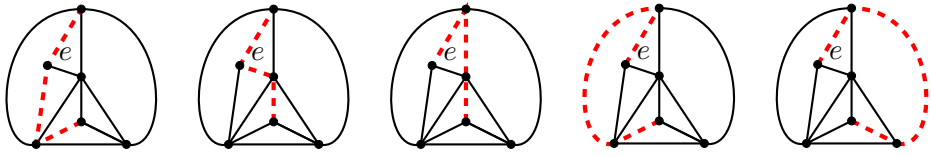
\begin{figure}[h]
\centering
\begin{tikzpicture}[scale=0.3]
\draw[thick] (0,0) -- (4,0) -- (2,1);
\draw[thick] (2,3) -- (4,0) -- (2,1)--(2,3);
\draw[thick](0,0) -- (2,3);
\draw[thick](2,3)--(2,6);
\draw[black,thick](0,0)..controls (-2,-0.5) and (-2,6) .. (2,6);
\draw[black,thick](4,0)..controls (6,-0.5) and (6,6) .. (2,6);
\draw[black,thick](0.5,3.5)--(2,3);
\draw[dashed, red,ultra thick](2,1)--(0,0)--(0.5,3.5)--(2,6);
\node at (1.3,4){$e$};
\draw[fill=black] (0,0) circle (5pt);
\draw[fill=black] (4,0) circle (5pt);
\draw[fill=black] (2,1) circle (5pt);
\draw[fill=black] (2,3) circle (5pt);
\draw[fill=black] (2,6) circle (5pt);
\draw[fill=black] (0.5,3.5) circle (5pt);
\end{tikzpicture}
\begin{tikzpicture}[scale=0.3]
\draw[thick] (0,0) -- (4,0) -- (2,1)--(0,0);
\draw[thick] (2,3) -- (4,0) -- (2,1);
\draw[thick](0,0) -- (2,3);
\draw[thick](2,3)--(2,6);
\draw[black,thick](0,0)..controls (-2,-0.5) and (-2,6) .. (2,6);
\draw[black,thick](4,0)..controls (6,-0.5) and (6,6) .. (2,6);
\draw[black,thick](0,0)--(0.5,3.5);
\draw[dashed, red,ultra thick](2,1)--(2,3)--(0.5,3.5)--(2,6);
\node at (1.3,4){$e$};
\draw[fill=black] (0.5,3.5) circle (5pt);
\draw[fill=black] (0,0) circle (5pt);
\draw[fill=black] (4,0) circle (5pt);
\draw[fill=black] (2,1) circle (5pt);
\draw[fill=black] (2,3) circle (5pt);
\draw[fill=black] (2,6) circle (5pt);
\end{tikzpicture}
\begin{tikzpicture}[scale=0.3]
\draw[thick] (0,0) -- (4,0) -- (2,1)--(0,0);
\draw[thick] (2,3) -- (4,0) -- (2,1);
\draw[thick](0,0) -- (2,3);
\draw[black,thick](0,0)..controls (-2,-0.5) and (-2,6) .. (2,6);
\draw[black,thick](4,0)..controls (6,-0.5) and (6,6) .. (2,6);
\draw[black,thick](0,0)--(0.5,3.5)--(2,3);
\draw[dashed, red,ultra thick](2,1)--(2,3)--(2,6)--(0.5,3.5);
\node at (1.3,4){$e$};
\draw[fill=black] (0,0) circle (5pt);
\draw[fill=black] (4,0) circle (5pt);
\draw[fill=black] (2,1) circle (5pt);
\draw[fill=black] (2,3) circle (5pt);
\draw[fill=black] (2,6) circle (5pt);
\draw[fill=black] (0.5,3.5) circle (5pt);
\end{tikzpicture}
\begin{tikzpicture}[scale=0.3]
\draw[thick] (0,0) -- (4,0)--  (2,1);
\draw[thick] (2,3) -- (4,0) -- (2,1)--(2,3);
\draw[thick](0,0) -- (2,3);
\draw[thick](2,3)--(2,6);
\draw[dashed, red,ultra thick](0,0)..controls (-2,-0.5) and (-2,6) .. (2,6);
\draw[black,thick](4,0)..controls (6,-0.5) and (6,6) .. (2,6);
\draw[black,thick](0,0)--(0.5,3.5)--(2,3);
\draw[dashed, red,ultra thick](2,1)--(0,0)(0.5,3.5)--(2,6);
\node at (1.3,4){$e$};
\draw[fill=black] (0,0) circle (5pt);
\draw[fill=black] (4,0) circle (5pt);
\draw[fill=black] (2,1) circle (5pt);
\draw[fill=black] (2,3) circle (5pt);
\draw[fill=black] (2,6) circle (5pt);
\draw[fill=black] (0.5,3.5) circle (5pt);
\end{tikzpicture}
\begin{tikzpicture}[scale=0.3]
\draw[thick] (0,0) -- (4,0)  (2,1)--(0,0);
\draw[thick] (2,3) -- (4,0)  (2,1)--(2,3);
\draw[thick](0,0) -- (2,3);
\draw[thick](2,3)--(2,6);
\draw[black,thick](0,0)..controls (-2,-0.5) and (-2,6) .. (2,6);
\draw[dashed, red, ultra thick](4,0)..controls (6,-0.5) and (6,6) .. (2,6);
\draw[black,thick](0,0)--(0.5,3.5)--(2,3);
\draw[dashed, red,ultra thick](2,1)--(4,0)(0.5,3.5)--(2,6);
\draw[fill=black] (0,0) circle (5pt);
\draw[fill=black] (4,0) circle (5pt);
\draw[fill=black] (2,1) circle (5pt);
\draw[fill=black] (2,3) circle (5pt);
\draw[fill=black] (2,6) circle (5pt);
\draw[fill=black] (0.5,3.5) circle (5pt);
\node at (1.3,4){$e$};
\end{tikzpicture}
\caption{Five Type-III(A), $3$-paths that contains the star edge $e$.}
\label{fivepaths}
\end{figure}\ \\
Notice that for each vertex $w$ inside the regions, one can have at most two Type-III(B), $3$-paths containing $w$. For the vertex $u$, we have six Type-III(B), $3$-paths containing the vertex $u$ (see Figure~\ref{sixpath}).

\begin{figure}[h]
\centering
\begin{tikzpicture}[scale=0.3]
\draw[thick] (0,0) -- (4,0) (2,1)--(0,0);
\draw[thick] (2,3) -- (4,0) (2,1)--(2,3);
\draw[thick](0,0) -- (2,3);
\draw[black,thick](0,0)..controls (-2,-0.5) and (-2,6) .. (2,6);
\draw[dashed, red,ultra thick](4,0)..controls (6,-0.5) and (6,6) .. (2,6);
\draw[dashed, red,ultra thick](4,0)--(2,1)(2,6)--(2,3);
\draw[fill=black] (0,0) circle (5pt);
\draw[fill=black] (4,0) circle (5pt);
\draw[fill=black] (2,1) circle (5pt);
\draw[fill=black] (2,3) circle (5pt);
\draw[fill=black] (2,6) circle (5pt);
\end{tikzpicture}  
\begin{tikzpicture}[scale=0.3]
\draw[thick] (0,0) -- (4,0)-- (2,1)--(0,0);
\draw[thick] (2,3) -- (4,0)--(2,1);
\draw[thick](0,0) -- (2,3);
\draw[black,thick](0,0)..controls (-2,-0.5) and (-2,6) .. (2,6);
\draw[dashed, red,ultra thick](4,0)..controls (6,-0.5) and (6,6) .. (2,6);
\draw[dashed, red,ultra thick](2,3)--(2,1)(2,6)--(2,3);
\draw[fill=black] (0,0) circle (5pt);
\draw[fill=black] (4,0) circle (5pt);
\draw[fill=black] (2,1) circle (5pt);
\draw[fill=black] (2,3) circle (5pt);
\draw[fill=black] (2,6) circle (5pt);
\end{tikzpicture}  
\begin{tikzpicture}[scale=0.3]
\draw[thick] (0,0) -- (4,0) -- (2,1)--(0,0);
\draw[thick] (2,3) -- (4,0) -- (2,1);
\draw[thick](0,0) -- (2,3);
\draw[dashed, red,ultra thick](0,0)..controls (-2,-0.5) and (-2,6) .. (2,6);
\draw[black,thick](4,0)..controls (6,-0.5) and (6,6) .. (2,6);
\draw[dashed, red,ultra thick](2,3)--(2,1)(2,6)--(2,3);
\draw[fill=black] (0,0) circle (5pt);
\draw[fill=black] (4,0) circle (5pt);
\draw[fill=black] (2,1) circle (5pt);
\draw[fill=black] (2,3) circle (5pt);
\draw[fill=black] (2,6) circle (5pt);
\end{tikzpicture}  
\begin{tikzpicture}[scale=0.3]
\draw[thick] (0,0) -- (4,0) --  (2,1);
\draw[thick] (2,3) -- (4,0)  (2,1)--(2,3);
\draw[thick](0,0) -- (2,3);
\draw[dashed, red,ultra thick](0,0)..controls (-2,-0.5) and (-2,6) .. (2,6);
\draw[black,thick](4,0)..controls (6,-0.5) and (6,6) .. (2,6);
\draw[dashed, red,ultra thick](0,0)--(2,1)(2,6)--(2,3);
\draw[fill=black] (0,0) circle (5pt);
\draw[fill=black] (4,0) circle (5pt);
\draw[fill=black] (2,1) circle (5pt);
\draw[fill=black] (2,3) circle (5pt);
\draw[fill=black] (2,6) circle (5pt);
\end{tikzpicture}  
\begin{tikzpicture}[scale=0.3]
\draw[thick] (0,0) -- (4,0) -- (2,1);
\draw[thick] (2,3) -- (4,0) -- (2,1)--(2,3);
\draw[thick](0,0) -- (2,3);
\draw[thick](2,3)--(2,6);
\draw[dashed, red,ultra thick](0,0)..controls (-2,-0.5) and (-2,6) .. (2,6);
\draw[dashed, red,ultra thick](4,0)..controls (6,-0.5) and (6,6) .. (2,6);
\draw[dashed,red,ultra thick](2,1)--(0,0);
\draw[fill=black] (0,0) circle (5pt);
\draw[fill=black] (4,0) circle (5pt);
\draw[fill=black] (2,1) circle (5pt);
\draw[fill=black] (2,3) circle (5pt);
\draw[fill=black] (2,6) circle (5pt);
\end{tikzpicture}  
\begin{tikzpicture}[scale=0.3]
\draw[thick] (0,0) -- (4,0)  (2,1)--(0,0);
\draw[thick] (2,3) -- (4,0)  (2,1)--(2,3);
\draw[thick](0,0) -- (2,3);
\draw[thick](2,3)--(2,6);
\draw[dashed,red,ultra thick](0,0)..controls (-2,-0.5) and (-2,6) .. (2,6);
\draw[dashed,red,ultra thick](4,0)..controls (6,-0.5) and (6,6) .. (2,6);
\draw[dashed,red,ultra thick](2,1)--(4,0);
\draw[fill=black] (0,0) circle (5pt);
\draw[fill=black] (4,0) circle (5pt);
\draw[fill=black] (2,1) circle (5pt);
\draw[fill=black] (2,3) circle (5pt);
\draw[fill=black] (2,6) circle (5pt);
\end{tikzpicture}    
\caption{Six Type-III(B), $3$-paths with respect to the vertex $v$.}
\label{sixpath}
\end{figure}
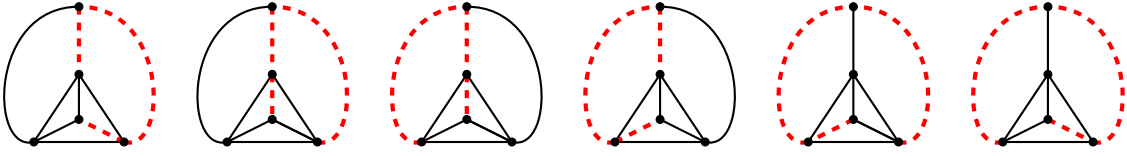

\begin{enumerate}[label=\arabic*.,font=\bfseries]
\item If there is no star edge in each of the three regions, then we have at most 
\[4\Big(3n-6-(\sum_{i=1}^3d(x_i)-3)\Big)+2(n-5)+6=-4\sum_{i=1}^3d(x_i)+14n-16\] Type-III, $3$-paths containing  the vertex $v$. Thus, we have 
\[
P_4(G,v)\leq 4\sum_{i=1}^3d(x_i)-24-4\sum_{i=1}^3d(x_i)+14n-16  \leq 14n-40.\]
Therefore, $P_4(G,v)<14n-39.$   

\item If there is only one star edge, then we have at most 
\[4\Big(3n-6-(\sum_{i=1}^3d(x_i)-3)-1\Big)+5+2(n-5)+6=-4\sum_{i=1}^3d(x_i)+14n-15\]
Type-III, $3$-paths with respect to the vertex $v$. Therefore $P_4(G,v)\leq 14n-39$.

\begin{remark}\label{equality_triangle}
Equality holds  if we have a vertex $v$ of degree three and a vertex $u$ which is adjacent to all of the vertices incident to $v$.  All the other vertices share exactly $2$ neighbors with $v$, and we have exactly one star edge. 
\end{remark}

\item If there are exactly two star edges, then we have two regions containing them. Without loss of generality, let the regions be $R_1$ and $R_2$. The third region, $R_3$, may or may not contain a vertex.
\begin{enumerate}[label*=\arabic*]
\item  If there is a vertex in $R_3$, then at least one vertex in $R_3$ is a neighbor of the vertex $u$, hence this vertex is not a neighbor of at least one of the vertices $x_1$ or $x_3$. Otherwise, we would have another star edge. It follows that there is no Type-III(B), $3$-path containing this vertex. Thus, the number of Type-III, $3$-paths with respect to the vertex $v$ is at most
\[
4\Big(3n-6-(\sum_{i=1}^3d(x_i)-3)-2\Big)+10+2(n-6)+6=-4\sum_{i=1}^3d(x_i)+14n-16. 
\]
So we have $P_4(G,v)\leq 14n-40$. Therefore, $P_4(G,v)<14n-39.$   
\item  If there is no vertex in the region $R_3$, then at least one of the regions $R_1$ or $R_2$ contains at least two vertices, since $n \geq 10$. Without loss of generality, suppose $R_1$ contains at least two vertices. Let $f_1$ be the star edge in the region. This edge is incident to $u$, and we denote the other vertex it is incident to by $u_1$. We have $u_1\in N(x_1)\cap N(x_2)$. If there is a vertex in the region $R$, defined by the vertices $x_1,u_1$ and $u$ not containing $x_2$, then there is an edge $\{u_1, u_1'\}$ in the region $R$, where $u_1' \notin \{x_1, x_2, x_3\}$. This edge is in at most three Type-III(A), $3$-paths. Moreover $u_1'$ is not incident to the vertex $x_2$. Hence $u_1'$ is not in any of the Type-III(B) paths. Therefore we have at most \[4\Big(3n-6-(\sum_{i=1}^3d(x_i)-3)-2-1\Big)+13+2(n-6)+6=-4\sum_{i=1}^3d(x_i)+14n-17\]
Type-III, $3$-paths with respect to the vertex $v$. Consequently, we have $P_4(G,v)\leq 14n-41$. Therefore, $P_4(G,v)<14n-39.$ 

Similarly the region defined by the vertices $x_2,u_1,u$ not containing $x_1$ is also empty, otherwise we are done by induction.

 Thus the vertices must be in the region $R'$, defined by the vertices $x_1,x_2,u_1$ not containing $u$. Consider an edge $f_2=\{u_1,u_2\}$ in the region $R'$. If $u_2$ is the only vertex in the region $R'$, then $N(u_2)=\{x_1, x_2, u_1\}$, and we are done by induction, since we have a vertex $u_2$ of degree three with at most one star edge, which was settled in Cases~1.2(1) and~1.2(2) (see Figure~\ref{specialvertex}).

\begin{figure}[h]
\centering
\begin{tikzpicture}[scale=0.8]
\draw[thick] (0,0) -- (4,0) -- (2,1)--(0,0);
\draw[thick] (2,3) -- (4,0) -- (2,1)--(2,3);
\draw[thick](2,3)--(2,6);
\draw[black, thick](0,0)..controls (-2,-0.5) and (-2,6) .. (2,6);
\draw[black,thick](4,0)..controls (6,-0.5) and (6,6) .. (2,6);
\draw[black,thick](2,3)(0.5,3.5)--(2,6);
\draw[dashed, red,ultra thick](0,0)--(2,3)--(0.5,3.5)--(1,2.7)--(2,3)(0,0)--(0.5,3.5)--(1,2.7)--(0,0);

\filldraw[fill=black, fill opacity=0.15] plot coordinates {(2,3) (2,6) (0.5,3.5)} ;
\path [fill=black, fill opacity=0.15 ] (0,0)..controls (-2,-0.5) and (-2,6) .. (2,6)--(0.5,3.5)--(0,0);
\draw[black, thick](2,6)--(0.5,3.5)(2,6)--(2,3);
\node at (-0.5,-0.5) {$x_1$};
\node at (4.5,-0.5) {$x_3$};
\node at (2.5,1.2) {$v$};
\node at (2.5,3.3) {$x_2$};
\node at (2,6.3) {$u$};
\node at (1.5,4.7){$f_1$};
\node at (0.2,3.5){$u_1$};
\node at (1.25,2.5){$u_2$};
\node at (0.9,3.2){$f_2$};
\draw[fill=black] (0,0) circle (3pt);
\draw[fill=black] (4,0) circle (3pt);
\draw[fill=black] (2,1) circle (3pt);
\draw[fill=black] (2,3) circle (3pt);
\draw[fill=black] (2,6) circle (3pt);
\draw[fill=black] (0.5,3.5) circle (3pt);
\draw[fill=black] (1,2.7) circle (3pt);

\end{tikzpicture}
\caption{A vertex $u_2$ with the property that two of the corresponding regions have no vertex inside.}
\label{specialvertex}
\end{figure}
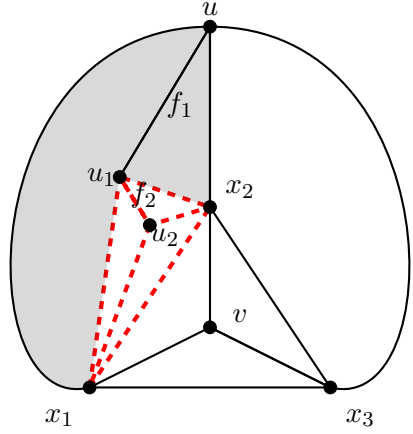

If the vertex $u_2$ is not a neighbor of one of the vertices $x_1$ or  $x_2$, then the edge $f_2$ is not incident to the triangle and is contained in at most three Type-III(A), $3$-paths. Moreover $u_2$ is in none of the Type-III(B) paths. Therefore, we have at most \[4\Big(3n-6-(\sum_{i=1}^3d(x_i)-3\Big)-2-1)+13+2(n-6)+6=-4\sum_{i=1}^3d(x_i)+14n-17\] Type-III, $3$-paths. Consequently, we have $P_4(G,v)\leq 14n-41$. Therefore, $P_4(G,v)<14n-39.$ 

We have that there are at least two vertices in the region $R'$, and $u_2$ is incident with both of the vertices  $x_1$ and $x_2$.

A similar argument to the one given in Case 1.2(3.1) gives us that there is no vertex in the region defined by the vertices $x_1, u_2, u_1$  not containing $x_2$ and, likewise, in the region defined by the vertices  $x_2, u_2, u_1$ not containing $x_1$. Thus, all the vertices must be in the region defined by the vertices $x_1, u_2, x_2$ not containing $u_1$; let us denote this region by $R''$.
Consider an edge $f_3=\{u_2,u_3\}$ in the region $R''$. 
Thus we proceed with a similar argument as before, this time applied to the region $R''$ and the corresponding vertex $u_3$. 
Notice that $N(u_3)=\{x_1,x_2,u_2\}$. 
If $u_3$ is the only vertex in $R''$, then we are done by induction since $u_3$ would be a vertex of degree three and with at most one star edge, which was settled in Case~1.2(1) and Case~1.2(2). 
Otherwise, we get a region containing at least one vertex, say $R'''$, defined by the vertices $x_1,x_2$ and $u_3$ not containing $u_2$. 
We apply similar reasoning to $R'''$ as applied for $R'$ and $R''$. 
Since $G$ is finite, after a finite number of steps $k$, we obtain a vertex $u_k$, such that $N(u_k)=\{x_1,x_2,u_{k-1}\}$ and  $u_k$ is incident with at most one star edge, which was settled in Case~1.2(1) and Case~1.2(2).   
\end{enumerate}

\item Suppose there are three star edges. Let $\{u,y_i\}$ be the star edge in the region $R_i$, for each $i \in\{1,2,3\}$. Since $n\geq 10$, one of the regions $R_1, R_2$ or $R_3$ contains at least one additional vertex other than $y_i$. Without loss of generality, let $R_1$ be such a region. If there is a vertex in the region $x_1, y_1, u$ not containing $x_2$, then we have at least one edge, say $\{y_1,y_1'\} $, for some $y_1'$ inside the region bounded by $x_1,y_1$ and $u$ not containing $x_2$. The edge $\{y_1,y_1'\}$ is in at most three Type-III(A), $3$-paths. Moreover, the vertex $y_1'$ is not incident to $x_2$ and $x_3$. Hence it is not in any Type-III(B) paths. Thus, we have at most \[4\Big(3n-6-(\sum_{i=1}^3d(x_i)-3)-4\Big)+18+2(n-6)+6=-4\sum_{i=1}^{3}d(x_i)+14n-16\]
Type-III, $3$-paths. 
Hence we have  $P_4(G,v)\leq 14n-40$. Therefore, $P_4(G,v)<14n-39.$ 

Similarly, the region defined by $x_2$, $y_1$ and $u$ not containing the vertex $x_1$ must be empty. Otherwise, we are done by induction. 

If the region obtained from the vertices $x_1,y_1, x_2$ not containing $u$ contains only one vertex $u'$, then we have a degree three vertex $u'$, and there is at most one star edge corresponding to the vertex $u'$. Hence, we are done by induction as in Case 1.2(1) or Case 1.2(2) for the vertex $u'$. Otherwise, if the region obtained by the vertices $x_1,y_1, x_2$ not containing $u$ contains more than one vertex, then we are done by similar arguments given in Case 1.2(3.2).
\end{enumerate}

\subsection*{Basis for the induction}
Here we are going to find the maximum number of paths of length three in a planar graph with at most $9$ vertices. This will form the basis for the induction. We are going to recall some facts from the previous calculations. Let  $G$ be a maximal planar graph on $n$ vertices, and $v\in V(G)$ be a vertex of minimum degree. 

If $d(v)=3$, then we have the following.
\begin{itemize}
    \item Suppose there is no vertex other than $v$ adjacent to all the neighbors of $v$, then from Case~1.1 we have 
    \begin{align}
        P_4(G,v)\leq 14n-44.
        \label{0}
    \end{align}
   
    \item Suppose there is a vertex other than $v$ which is adjacent to all the neighbors of $v$, then we consider the following cases. 
    \begin{itemize}
        \item If there is no star edge  with respect to the vertex $v$, then from Case 1.2.1 we have
        \begin{align}
            P_4(G,v)\leq 14n-40.
            \label{1}
        \end{align}
        \item  If there is only one star edge with respect to the vertex $v$, then from Case 1.2.2 we have
        \begin{align}
            P_4(G,v)\leq 14n-39.
            \label{12}
        \end{align}
       
        \item If there are two star edges with respect to the vertex $v$, then in this case we cannot use Case~1.2.3, since $n$ is not at least $10$. However, by similar calculations we have a weaker result for all $n$.
        \begin{align}
             \begin{split}
                  P_4(G,v)&\leq 4\sum_{i=1}^3d(x_i)-24+4\Big(3n-6-(\sum_{i=1}^3d(x_i)-3)-2\Big)+10+2(n-5)+6\\&=14n-38.
             \end{split}
            \label{13}
        \end{align}
        \item If there are three star edges with respect to the vertex $v$, then in this case we cannot use Case~1.2.4, since  $n\leq 9$. However, by similar calculations we have a weaker result for all~$n$. 
        
        \begin{align}
        \begin{split}
        P_4(G,v)&\leq 4\sum_{i=1}^3d(x_i)-24+4\Big(3n-6-(\sum_{i=1}^3d(x_i)-3)-3\Big)+15+2(n-5)+6\\&=14n-37.  
        \end{split}
        \label{14}
        \end{align}
        \end{itemize}
\end{itemize}

\begin{claim} $f(4,P_4)=12$ and $f(5,P_4)=42$.
\end{claim}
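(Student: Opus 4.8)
The plan is to reduce both equalities to the observation that on at most $5$ vertices there is, up to isomorphism, a single maximal planar graph, which turns out to be $F_n$, and then to evaluate $P_4(F_n)$ directly from Lemma~\ref{lemma1}. Every $n$-vertex planar graph is a spanning subgraph of some $n$-vertex maximal planar graph, and the number of copies of $P_4$ is monotone under adding edges, so $f(n,P_4)$ is attained by a maximal planar graph. For $n=4$ the only maximal planar graph is $K_4$, and $K_4=F_4$. For $n=5$ a maximal planar graph has $3\cdot5-6=9$ edges, hence is $K_5$ with one edge removed; since $K_5$ is edge-transitive this graph is unique up to isomorphism, and it coincides with $F_5$, whose only non-adjacent pair is the pair of endpoints of its central $P_3$.

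Next I would compute the two counts using Lemma~\ref{lemma1}. In $K_4$ every vertex has degree $3$ and there are $\binom{4}{3}=4$ triangles, so $P_4(K_4)=6\cdot(3-1)(3-1)-3\cdot4=24-12=12$ (equivalently, this is the number of Hamiltonian paths of $K_4$, namely $4!/2=12$). For $F_5$ the degree sequence is $(4,4,4,3,3)$, the nine edges contribute $\sum_{\{x,y\}\in E(F_5)}(d(x)-1)(d(y)-1)=63$, and $F_5$ has exactly $7$ triangles (all $\binom{5}{3}=10$ vertex triples except the three containing both endpoints of the central path), so $P_4(F_5)=63-3\cdot7=42$, which matches $7n^2-32n+27$ at $n=5$. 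Together with the first paragraph this yields $f(4,P_4)=12$ and $f(5,P_4)=42$.

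For the uniqueness of the extremal graphs (needed for Theorem~\ref{thm1}) I would invoke the local count in the proof of Lemma~\ref{lemma1}: the number of $3$-paths with a fixed edge $\{x,y\}$ as middle edge is $(d(x)-1)(d(y)-1)-d(x,y)$, and a short check shows this is strictly positive for every edge of $K_4$ and of $F_5$. Hence deleting any single edge strictly decreases $P_4$, so a $4$-vertex (resp.\ $5$-vertex) planar graph attains $12$ (resp.\ $42$) only if it equals $F_4$ (resp.\ $F_5$). I expect no genuine obstacle here; the two points that each deserve a line of justification are the uniqueness of the maximal planar graph for $n\le5$ and the triangle count in $F_5$, both elementary. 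Alternatively, to stay inside the inductive framework one may delete a degree-$3$ vertex $v$ from a $5$-vertex maximal planar graph $G$, note that $n=5$ is too small for any star edge to exist so that $P_4(G,v)\le 14\cdot5-40=30$ by~\eqref{1}, and combine this with $f(4,P_4)=12$.
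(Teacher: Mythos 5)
Your proposal is correct and follows essentially the same route as the paper: identify $K_4$ and $K_5^-\cong F_5$ as the unique maximal planar graphs on $4$ and $5$ vertices and verify the counts $12$ and $42$ directly (the paper leaves the verification to the reader, while you carry it out via Lemma~\ref{lemma1}). The extra monotonicity and edge-deletion remarks are fine and only add detail beyond what the paper's two-line proof records.
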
 
\begin{proof}
The maximal planar graphs with 4 and 5 vertices are unique. The graphs are $K_4$ and $K_5^-$ respectively.
It is easy to verify that $f(4,P_4)=12$ and $f(5,P_4)=42$.
\end{proof}
\begin{claim} $f(6,P_4)=87$.\label{c1} \end{claim}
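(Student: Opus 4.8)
The plan is to reduce to maximal planar graphs and then to exploit the fact that there are only two of them on six vertices. First, since $\N(P_4,\cdot)$ never decreases when an edge is added to a planar graph, $f(6,P_4)$ equals the maximum of $P_4(G)$ over maximal planar graphs $G$ on $6$ vertices; such a $G$ has $12$ edges, $8$ triangular faces, and $3\le\delta(G)\le 5$, so I would split into the three possible values of $\delta(G)$.

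The case $\delta(G)=5$ is impossible, since it would force $2e(G)=\sum_v d(v)\ge 30>24$. If $\delta(G)=4$, then $\sum_v d(v)=24$ forces $G$ to be $4$-regular; fixing a vertex $v$, its link is a $4$-cycle $x_1x_2x_3x_4$, the remaining vertex lies outside this cycle and (having degree $4$ and not being adjacent to $v$) is adjacent to all four $x_i$, after which the degree condition at each $x_i$ leaves no room for a chord $x_1x_3$ or $x_2x_4$; hence $G$ is the octahedron $K_{2,2,2}$. Since $K_{2,2,2}$ has exactly $8$ triangles, Lemma~\ref{lemma1} gives $P_4(K_{2,2,2})=12\cdot 3\cdot 3-3\cdot 8=84$.

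The main case is $\delta(G)=3$. Picking $v$ with $d(v)=3$, the graph $G-v$ is maximal planar on $5$ vertices, hence $G-v=K_5^-$; because all triangular faces of $K_5^-$ are equivalent under its automorphisms, the pair consisting of $v$ and its three neighbours is determined up to isomorphism, and a direct check identifies $G$ with $F_6$ (equivalently, $G$ has degree sequence $(5,5,4,4,3,3)$, the two degree-$5$ vertices are adjacent and adjacent to everything, and the remaining four vertices induce a path). As $F_6$ is an Apollonian network, Theorem~\ref{hakimi} gives $\N(C_3,F_6)=3\cdot 6-8=10$, and Lemma~\ref{lemma1} then yields
\[
P_4(F_6)=\sum_{\{x,y\}\in E(F_6)}(d(x)-1)(d(y)-1)-3\N(C_3,F_6)=117-30=87 .
\]
Comparing the three cases gives $f(6,P_4)=87$, attained among maximal planar graphs only by $F_6$; and for an arbitrary planar $G$ on $6$ vertices with $P_4(G)=87$, passing to a maximal planar $G'\supseteq G$ forces $G'=F_6$, and since every edge of $F_6$ lies in some $P_4$, no edge can be deleted without strictly decreasing the count, so $G=F_6$.

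The only mildly delicate points are the two structural identifications — that the unique $4$-regular triangulation on six vertices is the octahedron, and that stacking a vertex into a face of $K_5^-$ reproduces $F_6$ — but both are immediate from elementary adjacency and degree bookkeeping, after which Lemma~\ref{lemma1} settles all the counts. (One can also avoid the enumeration entirely: when $n=6$ a degree-$3$ vertex $v$ cannot have two or more star edges, since that would require at least seven vertices, so the bound $P_4(G,v)\le 14n-39$ coming from the star-edge analysis applies and $P_4(G)\le 42+45=87$ by induction on the previous claim.)
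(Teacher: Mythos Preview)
Your proof is correct, but it takes a different route from the paper's. The paper does not enumerate the maximal planar graphs on six vertices; instead it disposes of $\delta(G)\ge 4$ via Lemma~\ref{lemma:d>3} (which for $n=6$ gives $P_4(G)<86$), then for $\delta(G)=3$ proves a structural sub-claim that any degree-$3$ vertex $v$ necessarily has a vertex $u\neq v$ adjacent to all of $N(v)$ and exactly one star edge, and finally combines the resulting bound $P_4(G,v)\le 14n-39=45$ with $f(5,P_4)=42$ to get $87$. Your direct enumeration (octahedron versus $F_6$, each handled by Lemma~\ref{lemma1}) is more self-contained and makes the explicit value $P_4(F_6)=87$ and the uniqueness of the extremal graph fall out of a single computation; the paper's argument, by contrast, is tailored to feed into the inductive machinery used for all larger $n$ and never computes $P_4(F_6)$ directly. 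Your parenthetical alternative at the end is essentially the paper's argument.
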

\begin{proof}
Let $G$ be a maximal planar graph on $6$ vertices. We have $\delta(G)=3$. First we prove the following claim. 
\begin{claim}\label{cm3}
There is a vertex different from $v$ which is adjacent to every neighbor of $v$. Moreover, there is one star edge with respect to $v$. 
\end{claim}
\begin{proof}
Let $N(v)=\{x_1,x_2,x_3\}$ and the remaining two vertices other than $v,x_1,x_2$ and $x_3$ be $y_1$ and $y_2$. By maximality, every edge of $G$ must be incident to exactly two triangular faces. Thus each of the edges in $\{x_1x_2, x_2x_3,x_3x_1\}$  must be incident to a triangular face which is not incident to $v$. The number of vertices contained in the triangular region bounded by $x_1,x_2$ and $x_3$ not containing $v$ is 2, namely $y_1$ and $y_2$. Thus, two of the edges, say $x_1x_2$ and $x_2x_3$, must use one of the vertices in $\{y_1,y_2\}$, say $y_1$, such that the $x_1x_2y_1$ and $x_2x_3y_1$ are triangular faces incident to the two edges. From the property that every face of a maximal planar graph is of size 3, necessarily $y_1$ and $y_2$ must be adjacent. Hence we obtain that the vertex $y_1$ is adjacent to every neighbor of $v$. Moreover, the edge $y_1y_2$ is the only star edge of $G$ with respect to $v$. 
\end{proof}
Now we proceed in proving Claim~\ref{c1}. Deleting the vertex $v$, we get a maximal planar graph on $5$ vertices which contains $42$ $3$-paths. 

Therefore, using Claim~\ref{cm3} we have that the number of $3$-paths which contain the vertex $v$ is at most 45,  from \eqref{1} and \eqref{12}. Thus $P_4(G)\leq 42+45 =87$ and we have a unique extremal graph $F_6$ with $87$ $3$-paths. 
\end{proof}


\begin{claim}\label{c2}  $f(7,P_4)=147$. \end{claim}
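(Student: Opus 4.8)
The plan is to prove $f(7,P_4)\ge 147$ and $f(7,P_4)\le 147$ separately, pinning down the unique extremal graph along the way. For the lower bound I would take $G$ to be the graph in Figure~\ref{qq}(B) and evaluate $P_4(G)$ with Lemma~\ref{lemma1}. This $G$ is an Apollonian network on $7$ vertices: build it from a triangle $x_1x_2x_3$ by placing one vertex inside the triangle, then a vertex $u$ in the outer face joined to $x_1,x_2,x_3$, then one vertex in each of two of the three outer faces created by $u$. By Theorem~\ref{hakimi} it therefore has exactly $3\cdot 7-8=13$ triangles; its degree sequence is $(6,5,5,5,3,3,3)$, and $\sum_{\{x,y\}\in E(G)}(d(x)-1)(d(y)-1)=186$ over its $15$ edges, so $P_4(G)=186-3\cdot 13=147$.

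For the upper bound, let $G$ be any planar graph on $7$ vertices; as usual we may assume $G$ is a maximal planar graph, so $3\le\delta(G)\le 5$. If $\delta(G)\ge 4$, Lemma~\ref{lemma:d>3} gives $P_4(G)<7\cdot 49-36\cdot 7+50=141$, which suffices. So assume $\delta(G)=3$, fix a vertex $v$ with $d(v)=3$, and put $G'=G-v$, a maximal planar graph on $6$ vertices; then $P_4(G')\le 87$ by Claim~\ref{c1} and $P_4(G)=P_4(G')+P_4(G,v)$. The case bounds \eqref{0}--\eqref{14}, valid for all $n$, now apply. In Case~1.1 we get $P_4(G,v)\le 14\cdot 7-44=54$; if some vertex $u\ne v$ is adjacent to all of $N(v)$ and there are no star edges or exactly one star edge with respect to $v$, then $P_4(G,v)\le 58$ or $59$. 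In each of these subcases $P_4(G)\le 87+59=146<147$. There cannot be three star edges, since that needs three further vertices beyond $v,u,x_1,x_2,x_3$, forcing $n\ge 8$.

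Hence the only remaining possibility for $P_4(G)=147$ is that some $u\ne v$ is adjacent to all of $N(v)=\{x_1,x_2,x_3\}$ and there are exactly two star edges with respect to $v$, where \eqref{13} only yields $P_4(G,v)\le 14\cdot 7-38=60$, that is $P_4(G)\le 147$. The point is that this configuration is rigid when $n=7$. After relabelling $x_1,x_2,x_3$, the two star edges lie in regions $R_1$ and $R_2$, and their endpoints other than $u$ must be the two vertices $y_1,y_2$ of $V(G)\setminus\{v,u,x_1,x_2,x_3\}$, one lying inside $R_1$ and the other inside $R_2$; thus $R_3$ contains no vertex, $ux_1x_3$ is a face, and since $y_1$ (resp.\ $y_2$) is the only vertex of its region, maximality forces $N(y_1)=\{u,x_1,x_2\}$ and $N(y_2)=\{u,x_2,x_3\}$. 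This determines all $15$ edges of $G$, so $G$ is isomorphic to the graph of Figure~\ref{qq}(B), for which $P_4(G)=147$ by the computation above; moreover every edge of that graph lies in some $P_4$, so no proper subgraph attains $147$. Consequently $f(7,P_4)=147$ and this graph is the unique extremal graph.

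The step I expect to be the main obstacle is the rigidity argument in the two-star-edge subcase: one has to use planarity and maximality carefully to conclude that having only two vertices outside $\{v,u,x_1,x_2,x_3\}$ forces the entire graph, and then identify the result with Figure~\ref{qq}(B). The supporting computations — the $13$ triangles and the edge-sum $186$ used to evaluate $P_4$ on that graph — are routine but must be done with care, since they are exactly what makes the value land at $147$ rather than $146=7\cdot 7^2-32\cdot 7+27$.
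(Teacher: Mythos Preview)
Your proof is correct and follows essentially the same route as the paper: reduce to a maximal planar graph, pick a degree-$3$ vertex $v$, delete it, invoke $f(6,P_4)=87$, and bound $P_4(G,v)$ via the case inequalities \eqref{0}--\eqref{13}, observing that at most two star edges can occur when $n=7$. The only notable difference is in the uniqueness step: the paper argues that equality forces $G-v=F_6$ and then checks which faces of $F_6$ admit two star edges, whereas you argue directly that on $7$ vertices the two-star-edge configuration is rigid and identify the resulting graph with Figure~\ref{qq}(B); both arguments are valid and short. Your explicit lower-bound computation via Lemma~\ref{lemma1} and Theorem~\ref{hakimi} (the paper omits this) and your explicit invocation of Lemma~\ref{lemma:d>3} for $\delta(G)\ge4$ are welcome additions.
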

\begin{proof}
Let $G$ be a maximal planar graph on $7$ vertices. We have  $d(v)=3$. Deleting this vertex we get a maximal planar graph with $6$ vertices and containing at most $87$ $3$-paths. Since the number of vertices is $7$, there are at most two star edges. Therefore using \eqref{0}, \eqref{1}, \eqref{12} and \eqref{13}, the maximum number of $3$-paths containing the vertex is 60. Hence $P_4(G)\leq 147$ and equality holds if we deleted a vertex with two star edges and the graph we obtained was $F_6$. There are only two faces in $F_6$ where we can place the deleted vertex in order to have two star edges, in both cases we get the same graph which pictured in Figure~\ref{qq}(B).
\end{proof}



\begin{claim}\label{c3}  $f(8,P_4)=222$.\end{claim}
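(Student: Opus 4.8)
The plan is to follow the pattern of Claims~\ref{c1} and~\ref{c2}: delete a vertex of degree three and combine Claim~\ref{c2} with the estimates \eqref{0}, \eqref{1}, \eqref{12}, \eqref{13} and \eqref{14} on the number of $3$-paths through that vertex. As before, it is enough to take $G$ to be a maximal planar graph on $8$ vertices, since adding edges never decreases the number of copies of $P_4$, and since every edge of the extremal triangulation will lie in some $P_4$ (it has minimum degree $3$), no proper subgraph of it can be extremal. For such a $G$ we have $3\le\delta(G)\le 5$; if $\delta(G)\ge 4$ then Lemma~\ref{lemma:d>3} gives $P_4(G)<7\cdot 8^2-36\cdot 8+50=210<222$, so I may assume $\delta(G)=3$.

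Next, fix $v$ with $d(v)=3$ and $N(v)=\{x_1,x_2,x_3\}$. Deleting $v$ leaves a maximal planar graph on $7$ vertices, so $P_4(G-v)\le 147$ by Claim~\ref{c2}, and every $3$-path of $G$ either avoids $v$ or passes through $v$, so $P_4(G)=P_4(G-v)+P_4(G,v)$. For $n=8$ the dichotomy of Case~1.1 and Case~1.2 still applies, but the subcases of Case~1.2 with two and three star edges must be replaced by the weakened bounds \eqref{13} and \eqref{14}, valid for all $n$, because $n<10$. Evaluating \eqref{0}, \eqref{1}, \eqref{12}, \eqref{13} and \eqref{14} at $n=8$ gives $P_4(G,v)\le 14\cdot 8-37=75$, and the value $75$ can occur only in the situation of \eqref{14}, i.e. when $x_1,x_2,x_3$ have a common neighbour $u\ne v$ and all three star edges with respect to $v$ are present. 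Therefore $P_4(G)\le 147+75=222$.

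For the characterisation of equality, $P_4(G)=222$ forces $P_4(G-v)=147$ and $P_4(G,v)=75$ for this $v$. The first equality, together with the uniqueness part of Claim~\ref{c2}, says $G-v$ is the graph of Figure~\ref{qq}(B). The second forces the three star edges, so writing $u$ for the second common neighbour of $x_1,x_2,x_3$ and $y_1,y_2,y_3$ for the star-edge endpoints in the three regions, the vertex set of $G$ is exactly $\{v,x_1,x_2,x_3,u,y_1,y_2,y_3\}$, and the edges $vx_i$, $x_ix_j$, $ux_i$, $uy_i$ together with $x_1y_1,x_2y_1,x_2y_2,x_3y_2,x_1y_3,x_3y_3$ are all forced. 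These number $18=3\cdot 8-6$, hence they are all the edges of $G$, so $G$ is determined up to isomorphism and one checks that it is the graph of Figure~\ref{qq}(C). A short computation with Lemma~\ref{lemma1} (the graph has degree sequence $(6,6,6,6,3,3,3,3)$, so $\sum_{xy\in E}(d(x)-1)(d(y)-1)=270$, and $\N(C_3,G)=16$) confirms $P_4(G)=270-48=222$, so this graph really is extremal.

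I expect the main difficulty to be the bookkeeping in the equality analysis rather than anything conceptual: one must be sure that $P_4(G,v)=75$ can be attained only in the three-star-edge configuration (this is exactly why the all-$n$ estimates \eqref{13} and \eqref{14} are needed, since for $n=8$ the arguments from the inductive step handling Case~1.2 with two or three star edges are unavailable), and that once three star edges are present the planarity of $G$ genuinely pins down all $18$ of its edges. Establishing that $G-v$ is precisely Figure~\ref{qq}(B) rather than just some $7$-vertex graph with $147$ copies of $P_4$ rests entirely on the uniqueness clause of Claim~\ref{c2}.
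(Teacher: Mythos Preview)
Your proof is correct and follows essentially the same approach as the paper: delete a degree-$3$ vertex, combine Claim~\ref{c2} with the bounds \eqref{0}--\eqref{14} to get $P_4(G)\le 147+75=222$, and use the uniqueness in Claim~\ref{c2} together with the three-star-edge configuration to pin down the extremal graph. Your version is in fact a bit more thorough than the paper's --- you explicitly dispose of the case $\delta(G)\ge 4$ via Lemma~\ref{lemma:d>3} and verify $P_4=222$ by direct computation --- whereas the paper identifies the extremal graph by noting there is a unique face of Figure~\ref{qq}(B) in which a new vertex acquires three star edges; both routes arrive at Figure~\ref{qq}(C).
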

\begin{proof}
Let $G$ be a maximal planar graph on $8$ vertices. Since $d(v)=3$,   after deleting the vertex $v$ from $G$, we get a seven vertex maximal planar graph containing at most $147$ paths of length three. However, from \eqref{0}, \eqref{1}, \eqref{12}, \eqref{13} and \eqref{14}, the maximum number of $3$-paths that contain the vertex $v$ is at most 75. Thus $P_4(G)\leq 222$ and equality holds if we have deleted a vertex with incident three star edges and the graph we got was also extremal (Figure~\ref{qq}(B)). There is a unique face of the graph in Figure~\ref{qq}(B) where we can place a deleted vertex in order to have three star edges. This leads us to the unique extremal graph pictured in Figure~\ref{qq}(C).
\end{proof}
\begin{claim}  $f(9,P_4)=306$.\end{claim}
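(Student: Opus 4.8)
\emph{Lower bound and first reductions.} I would begin by noting that $F_9$ itself has exactly $306$ copies of $P_4$: applying Lemma~\ref{lemma1}, $F_9$ has two vertices of degree $8$, five of degree $4$, two of degree $3$, and $19$ triangles, which gives $P_4(F_9)=306=7\cdot 81-32\cdot 9+27$. So it remains to prove the upper bound and uniqueness. Let $G$ be a maximal planar graph on $9$ vertices. If $\delta(G)\ge 4$, then Lemma~\ref{lemma:d>3} gives $P_4(G)<7\cdot 81-36\cdot 9+50=293<306$, and since $\delta(F_9)=3$ this case contributes nothing to equality. So assume $\delta(G)=3$, fix $v$ with $d(v)=3$ and $N(v)=\{x_1,x_2,x_3\}$, and set $G'=G-v$, a maximal planar graph on $8$ vertices with $P_4(G')\le f(8,P_4)=222$ by Claim~\ref{c3}; thus $P_4(G)=P_4(G')+P_4(G,v)$. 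If no vertex other than $v$ is adjacent to all of $x_1,x_2,x_3$, then \eqref{0} gives $P_4(G,v)\le 14\cdot 9-44=82$, so $P_4(G)\le 304<306$. Hence we may assume there is a vertex $u\ne v$ adjacent to $x_1,x_2,x_3$, and we split according to the number $s$ of star edges with respect to $v$.

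\emph{The many–star-edge cases and the trade-off.} For $s\in\{2,3\}$ I would re-examine the proofs of Cases~1.2(3) and~1.2(4): the only place they used $n\ge 10$ was to force certain regions to carry extra vertices, and for $n=9$ this still holds (with two, resp.\ three, star edges and nine vertices the relevant region contains at least two vertices). Consequently each of these cases either already yields $P_4(G,v)\le 14\cdot 9-40=86$, or produces a degree-$3$ vertex $w$ whose neighborhood has a common neighbor and at most one star edge, and relabeling $v:=w$ reduces to the case $s\le 1$. In the case $s\le 1$, \eqref{1} and \eqref{12} give $P_4(G,v)\le 14\cdot 9-39=87$. Since this alone only gives $P_4(G)\le 309$, the decisive step is to show that $P_4(G,v)$ and $P_4(G-v)$ cannot both be near their maxima. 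For this I would argue that $P_4(G,v)\ge 85$ forces almost all of the inequalities in the proof of \eqref{12} (and in the reductions above) to be near-equalities, so that $G$ is, up to a bounded number of vertices, the configuration of Remark~\ref{equality_triangle}: a vertex $u$ adjacent to all of $N(v)$, exactly one star edge, and every other vertex meeting $N(v)$ in precisely two vertices. This constrains $G$, hence $G-v$, to a short explicit list of graphs; computing $P_4(G-v)$ on that list shows $P_4(G-v)\le 306-P_4(G,v)$, with equality only when $P_4(G,v)=87$, $G-v=F_8$ (so $P_4(G-v)=219$), and $G=F_9$. Finally, when $P_4(G,v)\le 84$ we get $P_4(G)\le 84+222=306$ at once, and equality there would require $G-v$ to be the unique extremal $8$-vertex graph of Figure~\ref{qq}(C) with a degree-$3$ vertex contributing exactly $84$ paths; inspecting the faces of that graph rules this out, so this regime is strict. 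Combining all cases yields $P_4(G)\le 306$, attained only by $F_9$.

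\emph{Main obstacle.} The hard part is precisely this trade-off. Unlike the cases $n\le 8$, the value $f(8,P_4)=222$ strictly exceeds the formula value $7\cdot 64-32\cdot 8+27=219$, so simply adding the worst-case value of $P_4(G,v)$ to $f(8,P_4)$ overshoots $306$. One is therefore forced to prove that a degree-$3$ vertex carrying nearly the maximum number of $P_4$'s can only be attached to a graph that is itself well below the $8$-vertex maximum — in fact only to $F_8$ — and establishing this requires pinning down the near-equality structure of \eqref{12}/Remark~\ref{equality_triangle} and performing a small finite verification, rather than invoking a clean bound as in the previous base cases.
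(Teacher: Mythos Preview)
Your diagnosis of the obstacle is exactly right: because $f(8,P_4)=222>219$, simply combining $P_4(G,v)\le 14\cdot 9-39=87$ with Claim~\ref{c3} overshoots, and one must rule out the simultaneous near-extremality of $P_4(G,v)$ and $P_4(G-v)$. However, the proposal does not actually carry out this trade-off. The assertion that ``$P_4(G,v)\ge 85$ forces almost all of the inequalities in the proof of \eqref{12} to be near-equalities'' and hence pins $G$ down to ``a short explicit list'' is not substantiated: you neither derive the list nor verify the inequality $P_4(G-v)\le 306-P_4(G,v)$ on it. There is also a structural mismatch: Remark~\ref{equality_triangle} describes the \emph{one}-star-edge equality configuration, yet the non-reducing subcases of $s\in\{2,3\}$ (which for $n=9$ can still give $P_4(G,v)$ as large as $86$, so $86+222=308$) do \emph{not} fall under that remark, and your trade-off argument does not cover them. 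Finally, in the regime $P_4(G,v)\le 84$ you rely on ``inspecting the faces'' of the $8$-vertex extremal graph to rule out equality, but this inspection is not performed. As written, the central step is a plan rather than a proof.

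For comparison, the paper avoids the trade-off entirely. Once a common neighbour $u\ne v$ of $x_1,x_2,x_3$ is fixed, exactly four vertices remain, and the paper enumerates how they can be distributed among the three triangular regions determined by $u,x_1,x_2,x_3$: (i) $1{+}1{+}2$ (with a further split on the third region), (ii) $2{+}2{+}0$, (iii) $1{+}3{+}0$, (iv) $4{+}0{+}0$. In each case the paper either bounds $P_4(G,v)$ sharply enough (often by refining the Type-III count using the specific distribution, not just the star-edge count), shows that $G-v$ cannot be the unique $8$-vertex extremal graph of Figure~\ref{qq}(C), or lists the finitely many maximal planar graphs compatible with that distribution and computes $P_4$ directly via Lemma~\ref{lemma1}. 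This region-by-region enumeration is more laborious but completely explicit; it replaces your unproved ``short list from near-equality'' with an actual short list arising from a pigeonhole on four vertices in three regions. If you want to salvage your approach, the cleanest fix is to adopt this enumeration for the cases $P_4(G,v)\ge 85$ rather than appealing to an unexecuted rigidity argument.
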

\begin{proof}

If  there is no other vertex incident to all the vertices incident to the vertex $v$, then using~\eqref{0} we have at most 82 $3$-paths that contain $v$. Since deleting the vertex $v$ results in an eight vertex maximal planar graph, it contains at most $222$ $3$-paths, from Claim~\ref{c3}. Thus we have  $P_4(G)\leq 304$.

Now assume that the neighbors of $v$ have a common adjacent vertex other than $v$. Consider the three regions obtained as in Figure~\ref{regions}.
\begin{enumerate}
\item [(i)]If each of the three regions is nonempty, then there is a unique maximal planar graph of this kind (see Figure~\ref{16}).  Using Lemma~\ref{lemma1} one can compute that this planar graph contains $303$ $3$-paths.
\begin{figure}[h]
\centering
\begin{tikzpicture}[scale=0.2]
\draw[dashed, red,thick](-6,0)--(0,6)--(6,0)--(-6,0);
\draw[thick](0,3)--(-6,0) (0,3)--(0,6) (0,3)--(6,0);
\draw[thick](-6,0)--(0,-4)--(6,0);
\draw[thick](-6,0)--(-4,5)--(0,6);
\draw[dashed, red,ultra thick](-6,0)--(0,6)--(6,0)--(-6,0);
\draw[thick](-6,0)--(-5,6.5)--(0,6) (-4,5)--(-5,6.5) (-5,6.5)--(0,12);
\draw[thick](0,6)--(0,12);
\draw[thick](6,0)--(5,6.5)--(0,12);
\draw[thick](0,6)--(5,6.5);
\draw[black,thick](0,12)..controls (-9,11) and (-7,3) .. (-6,0);
\draw[black,thick](0,12)..controls (9,11) and (7,3) .. (6,0);
\draw[black,thick](0,12)..controls (12,11) and (12,-3) .. (0,-4);
\draw[fill=black](-6,0)circle(12pt);
\draw[fill=black](6,0)circle(12pt);
\draw[fill=black](0,6)circle(12pt);
\draw[fill=black](0,3)circle(12pt);
\draw[fill=black](0,12)circle(12pt);
\draw[fill=black](0,-4)circle(12pt);
\draw[fill=black](-4,5)circle(12pt);
\draw[fill=black](-5,6.5)circle(12pt);
\draw[fill=black](5,6.5)circle(12pt);
\node at (1,3.3){$v$};
\end{tikzpicture}
\caption{A maximal planar graph on $9$ vertices, containing 303 $3$-paths.}
\label{16}
\end{figure}
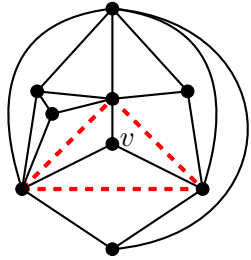
\item[(ii)] If two of the regions contain two vertices each, then the remaining region contains no vertex. The two nonempty regions contain a star edge. 

If in each of the two regions, we have a vertex which is incident to exactly one vertex of the triangle $N(v)$, then we have at most \[4\sum_{i=1}^3d(x_i)-24+4\Big(3n-6-(\sum_{i=1}^3d(x_i)-3)-2-2\Big)+16+2(n-7)+6=14n-44=82\] 
$3$-paths that contain the vertex $v$. Since deleting the vertex $v$ results in an eight vertex maximal planar graph, which contains at most $222$, $3$-paths, from  Claim~\ref{c3}, we get  $P_4(G)\leq 304$.

If only one of the two regions contain a vertex which is incident to exactly one vertex of the triangle, then there are only two such maximal planar graphs (see Figure~\ref{18}). The number of $3$-paths they contain are, respectively, 290 and 297.  
\begin{figure}[h]
\centering
\begin{tikzpicture}[scale=0.2]
\draw[dashed, red,thick](-6,0)--(0,6)--(6,0)--(-6,0);
\draw[thick](0,3)--(-6,0) (0,3)--(0,6) (0,3)--(6,0);
\draw[thick](-6,0)--(-4,5)--(0,6);
\draw[thick](6,0)--(5,6.5)--(0,12);
\draw[dashed, red,ultra thick](-6,0)--(0,6)--(6,0)--(-6,0);
\draw[thick](-6,0)--(-5,6.5)(0,6)--(-5,6.5) (-4,5)--(-5,6.5) (-5,6.5)--(0,12)(4,5)--(5,6.5);
\draw[thick](0,6)--(0,12);
\draw[thick](6,0)--(4,5)--(0,12);
\draw[thick](0,6)--(4,5);
\draw[black,thick](0,12)..controls (-9,11) and (-7,3) .. (-6,0);
\draw[black,thick](0,12)..controls (9,11) and (7,3) .. (6,0);
\node at (0,-2){$P_4(G)=290$};
\draw[fill=black](-6,0)circle(12pt);
\draw[fill=black](6,0)circle(12pt);
\draw[fill=black](0,6)circle(12pt);
\draw[fill=black](0,3)circle(12pt);
\draw[fill=black](0,12)circle(12pt);
\draw[fill=black](-4,5)circle(12pt);
\draw[fill=black](-5,6.5)circle(12pt);
\draw[fill=black](4,5)circle(12pt);
\draw[fill=black](5,6.5)circle(12pt);
\node at (1,3.3){$v$};
\end{tikzpicture}
\begin{tikzpicture}[scale=0.2]
\draw[dashed, red,ultra thick](-6,0)--(0,6)--(6,0)--(-6,0);
\draw[thick](0,3)--(-6,0) (0,3)--(0,6) (0,3)--(6,0);
\draw[thick](-6,0)--(-4,5)--(0,6);
\draw[dashed, red,ultra thick](-6,0)--(0,6)--(6,0)--(-6,0);
\draw[thick](6,0)--(5,6.5)--(0,12)(0,6)--(5,6.5);
\draw[thick](-6,0)--(-5,6.5)(0,6)--(-5,6.5) (-4,5)--(-5,6.5) (-5,6.5)--(0,12);
\draw[thick](0,6)--(0,12)(0,6)--(1.8,8)--(0,12)(1.8,8)--(5,6.5);
\draw[black,thick](0,12)..controls (-9,11) and (-7,3) .. (-6,0);
\draw[black,thick](0,12)..controls (9,11) and (7,3) .. (6,0);
\node at (0,-2){$P_4(G)=297$};
\draw[fill=black](-6,0)circle(12pt);
\draw[fill=black](6,0)circle(12pt);
\draw[fill=black](0,6)circle(12pt);
\draw[fill=black](0,3)circle(12pt);
\draw[fill=black](0,12)circle(12pt);
\draw[fill=black](-4,5)circle(12pt);
\draw[fill=black](-5,6.5)circle(12pt);
\draw[fill=black](1.8,8)circle(12pt);
\draw[fill=black](5,6.5)circle(12pt);
\node at (1,3.3){$v$};
\end{tikzpicture}
\caption{Maximal planar graphs on $9$ vertices.}
\label{18}
\end{figure}
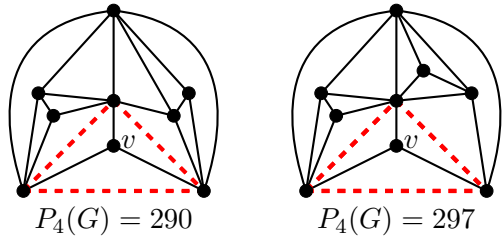

If in each of the two regions there is no vertex incident to exactly one vertex of the triangle, then the planar graph is unique (see Figure~\ref{17}). The number of $3$-paths in this graph is 296.
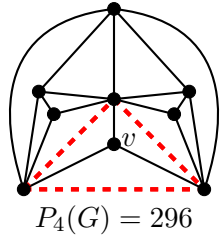
\begin{figure}[h]
\centering
\begin{tikzpicture}[scale=0.2]
\draw[dashed, red,ultra thick](-6,0)--(0,6)--(6,0)--(-6,0);
\draw[thick](0,3)--(-6,0) (0,3)--(0,6) (0,3)--(6,0);
\draw[thick](-6,0)--(-4,5)--(0,6);
\draw[thick](6,0)--(4,5)--(0,6);
\draw[dashed, red,ultra thick](-6,0)--(0,6)--(6,0)--(-6,0);
\draw[thick](-6,0)--(-5,6.5)--(0,6) (-4,5)--(-5,6.5) (-5,6.5)--(0,12)(4,5)--(5,6.5);
\draw[thick](0,6)--(0,12);
\draw[thick](6,0)--(5,6.5)--(0,12);
\draw[thick](0,6)--(5,6.5);
\draw[black,thick](0,12)..controls (-9,11) and (-7,3) .. (-6,0);
\draw[black,thick](0,12)..controls (9,11) and (7,3) .. (6,0);
\node at (0,-2){$P_4(G)=296$};
\draw[fill=black](-6,0)circle(12pt);
\draw[fill=black](6,0)circle(12pt);
\draw[fill=black](0,6)circle(12pt);
\draw[fill=black](0,3)circle(12pt);
\draw[fill=black](0,12)circle(12pt);
\draw[fill=black](-4,5)circle(12pt);
\draw[fill=black](-5,6.5)circle(12pt);
\draw[fill=black](4,5)circle(12pt);
\draw[fill=black](5,6.5)circle(12pt);
\node at (1,3.3){$v$};
\end{tikzpicture}
\caption{A maximal planar graph on $9$ vertices.} 
\label{17}
\end{figure}
\item[(iii)] Assume one of the regions contains a vertex and another contains three vertices (the third one is empty).

Suppose there is only one star edge, then the number of $3$-paths that contain the vertex $v$ is at most 
\[4\sum_{i=1}^3d(x_i)-24+4\Big(3n-6-(\sum_{i=1}^3d(x_i)-3)-1-1\Big)+8+2(n-6)+6=14n-42=84.\] 
Since one of the vertices of the triangle will be of degree $4$, after removing the vertex $v$, we will not have the unique extremal graph in Figure~\ref{qq}(C), since it does not contain a vertex of degree four. Thus, in this case, we have $P_4(G)<222+84=306$. 
After removing the vertex $v$ we will not get the unique extremal graph in Figure~\ref{qq}(C). Thus, in this case, we have $P_4(G)<222+84=306$. 

If there are two star edges, and there are two vertices  which are incident to exactly one of the vertices of the triangle, then we have at most \[4\sum_{i=1}^3d(x_i)-24+4\Big(3n-6-(\sum_{i=1}^3d(x_i)-3)-2-2\Big)+10+6+2(n-7)+6=14n-44=82\] $3$-paths containing the vertex $v$. Therefore $P_4(G)\leq 304$. 

If there are two star edges, and there are at least three vertices incident to two of the vertices of the triangle, then Figure~\ref{ddd} shows all possible nine vertex planar graphs. There are 300, 289, 292, 299 and 302, $3$-paths in those graphs, respectively.
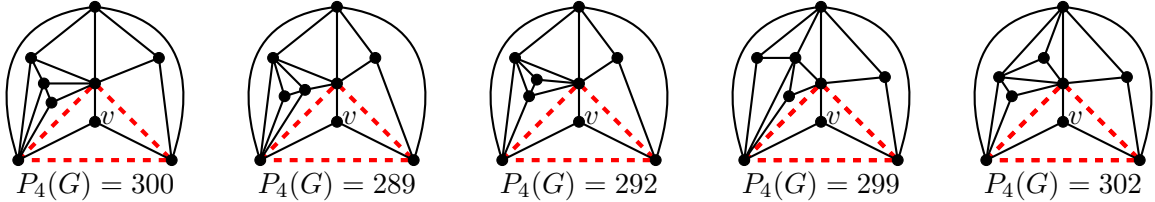
\begin{figure}[h]
\centering
\begin{tikzpicture}[scale=0.17] 
\draw[dashed, red,ultra thick](-6,0)--(0,6)--(6,0)--(-6,0);
\draw[thick](0,3)--(-6,0) (0,3)--(0,6) (0,3)--(6,0);
\draw[thick](6,0)--(5,8)--(0,6); 
\draw[thick](0,6)--(0,12)(-2.5,5.5);
\draw[thick](-6,0)--(-5,8)--(0,12);
\draw[thick](-6,0)--(-4,6)--(0,6);
\draw[thick](-6,0)--(-3.4,4.5)--(0,6);
\draw[thick](0,6)--(-5,8)--(-4,6)--(-3.4,4.5) (5,8) -- (0,12);
\draw[black,thick](0,12)..controls (-9,11) and (-7,3) .. (-6,0);
\draw[black,thick](0,12)..controls (9,11) and (7,3) .. (6,0);
\node at (0,-2){$P_4(G)=300$};
\draw[fill=black](-6,0)circle(12pt);
\draw[fill=black](6,0)circle(12pt);
\draw[fill=black](0,6)circle(12pt);
\draw[fill=black](0,3)circle(12pt);
\draw[fill=black](0,12)circle(12pt);
\draw[fill=black](-4,6)circle(12pt);
\draw[fill=black](-5,8)circle(12pt);
\draw[fill=black](-3.4,4.5)circle(12pt);
\draw[fill=black](5,8)circle(12pt);
\node at (1,3.3){$v$};
\end{tikzpicture}
\begin{tikzpicture}[scale=0.17]
\draw[dashed, red,ultra thick](-6,0)--(0,6)--(6,0)--(-6,0);
\draw[thick](0,3)--(-6,0) (0,3)--(0,6) (0,3)--(6,0);
\draw[thick](-6,0)--(-2.5,5.5)--(0,6); 
\draw[thick](0,6)--(0,12)(-2.5,5.5);
\draw[thick](-6,0)--(-5,8)--(0,12);
\draw[thick](0,12)--(3,8)--(6,0)(3,8)--(0,6);
\draw[thick](-5,8)--(-4.1,5)--(-6,0) (-2.5,5.5)--(-4.1,5);
\draw[thick](0,6)--(-5,8)--(-4,7)--(-3.2,6.2)--(-2.5,5.5);
\draw[black,thick](0,12)..controls (-9,11) and (-7,3) .. (-6,0);
\draw[black,thick](0,12)..controls (9,11) and (7,3) .. (6,0);
\node at (0,-2){$P_4(G)=289$};
\draw[fill=black](-6,0)circle(12pt);
\draw[fill=black](6,0)circle(12pt);
\draw[fill=black](0,6)circle(12pt);
\draw[fill=black](0,3)circle(12pt);
\draw[fill=black](0,12)circle(12pt);
\draw[fill=black](-4.1,5)circle(12pt);
\draw[fill=black](-5,8)circle(12pt);
\draw[fill=black](3,8)circle(12pt);
\draw[fill=black](-2.5,5.5)circle(12pt);
\node at (1,3.3){$v$};
\end{tikzpicture}
\begin{tikzpicture}[scale=0.17]
\draw[dashed, red,ultra thick](-6,0)--(0,6)--(6,0)--(-6,0);
\draw[thick](0,3)--(-6,0) (0,3)--(0,6) (0,3)--(6,0);
\draw[thick](0,6)--(0,12)(-2.5,5.5);
\draw[thick](-6,0)--(-5,8)--(0,12);
\draw[thick](0,12)--(3,8)--(6,0)(3,8)--(0,6);
\draw[thick](-5,8)--(-3.9,5)--(-6,0)(0,6)--(-3.9,5);
\draw[thick](0,6)--(-5,8)(-5,8)--(-3.3,6.3)--(0,6)(-3.9,5)--(-3.3,6.3);
\draw[black,thick](0,12)..controls (-9,11) and (-7,3) .. (-6,0);
\draw[black,thick](0,12)..controls (9,11) and (7,3) .. (6,0);
\node at (0,-2){$P_4(G)=292$};
\draw[fill=black](-6,0)circle(12pt);
\draw[fill=black](6,0)circle(12pt);
\draw[fill=black](0,6)circle(12pt);
\draw[fill=black](0,3)circle(12pt);
\draw[fill=black](0,12)circle(12pt);
\draw[fill=black](3,8)circle(12pt);
\draw[fill=black](-3.3,6.3)circle(12pt);
\draw[fill=black](-3.9,5)circle(12pt);
\draw[fill=black](-5,8)circle(12pt);
\node at (1,3.3){$v$};
\end{tikzpicture}
\begin{tikzpicture}[scale=0.17]
\draw[dashed, red,ultra thick](-6,0)--(0,6)--(6,0)--(-6,0);
\draw[dashed, red,ultra thick](-6,0)--(0,6)--(6,0)--(-6,0);
\draw[thick](0,3)--(-6,0) (0,3)--(0,6) (0,3)--(6,0);
\draw[thick](-6,0)--(-2.5,5)--(0,6) (-6,0)--(-2,8)--(0,6); 
\draw[thick](0,6)--(0,12)(-2.5,5)--(-2,8)--(0,12);
\draw[thick](6,0)--(5,6.5)--(0,12)(-6,0)--(-5,8)--(0,12)(-2,8)--(-5,8);
\draw[thick](0,6)--(5,6.5);
\draw[black,thick](0,12)..controls (-9,11) and (-7,3) .. (-6,0);
\draw[black,thick](0,12)..controls (9,11) and (7,3) .. (6,0);
\node at (0,-2){$P_4(G)=299$};
\draw[fill=black](-6,0)circle(12pt);
\draw[fill=black](6,0)circle(12pt);
\draw[fill=black](0,6)circle(12pt);
\draw[fill=black](0,3)circle(12pt);
\draw[fill=black](0,12)circle(12pt);
\draw[fill=black](-2.5,5)circle(12pt);
\draw[fill=black](5,6.5)circle(12pt);
\draw[fill=black](-2,8)circle(12pt);
\draw[fill=black](-5,8)circle(12pt);
\node at (1,3.3){$v$};
\end{tikzpicture}
\begin{tikzpicture}[scale=0.17]
\draw[dashed, red,ultra thick](-6,0)--(0,6)--(6,0)--(-6,0);
\draw[dashed, red,ultra thick](-6,0)--(0,6)--(6,0)--(-6,0);
\draw[thick](0,3)--(-6,0) (0,3)--(0,6) (0,3)--(6,0);
\draw[thick](-6,0)--(-4,5)--(0,6);
\draw[thick](-6,0)--(-5,6.5)--(0,6) (-4,5)--(-5,6.5) (-5,6.5)--(0,12)(-5,6.5)--(-1.5,8)--(0,12)(-1.5,8)--(0,6);
\draw[thick](0,6)--(0,12);
\draw[thick](6,0)--(5,6.5)--(0,12);
\draw[thick](0,6)--(5,6.5);
\draw[black,thick](0,12)..controls (-9,11) and (-7,3) .. (-6,0);
\draw[black,thick](0,12)..controls (9,11) and (7,3) .. (6,0);
\node at (0,-2){$P_4(G)=302$};
\draw[fill=black](-6,0)circle(12pt);
\draw[fill=black](6,0)circle(12pt);
\draw[fill=black](0,6)circle(12pt);
\draw[fill=black](0,3)circle(12pt);
\draw[fill=black](0,12)circle(12pt);
\draw[fill=black](-4,5)circle(12pt);
\draw[fill=black](-5,6.5)circle(12pt);
\draw[fill=black](5,6.5)circle(12pt);
\draw[fill=black](-1.5,8)circle(12pt);
\node at (1,3.3){$v$};
\end{tikzpicture}
\caption{Maximal planar graphs on $9$ vertices.}
\label{ddd}
\end{figure}

\item[(iv)] Assume all $4$ vertices are in the same region. 

Suppose there is no star edge, then we have at most \[4\sum_{i=1}^3d(x_i)-24+4\Big(3n-6-(\sum_{i=1}^3d(x_i)-3)\Big)+2(n-6)+6=14n-42=84\] $3$-paths containing the vertex $v$. After removing the vertex $v$ we will not get the unique extremal graph in Figure~\ref{qq}(C), since for each face of the graph in Figure~\ref{qq}(C) has a star edge. Thus, in this case, we have $P_4(G)<222+84=306$. 

Suppose there is a star edge and there is exactly one vertex which is not incident to two of the vertices of the triangle. Then that vertex must be incident to a vertex of the triangle. That vertex of the triangle has degree $8$, therefore after deleting the vertex $v$, we will get a vertex of degree~$7$. Since the graph in Figure~\ref{qq}(C) does not contain a vertex of degree~$7$, the number of $3$-paths not containing $v$ is at most $221$. The number of $3$-paths containing the vertex $v$ is at most \[4\sum_{i=1}^3d(x_i)-24+4(3n-6-(\sum_{i=1}^3d(x_i)-3)-1-1)+5+3+2(n-6)+6=14n-42=84.\]
Thus $P_4(G)< 306 $.

Suppose there is a star edge and there is more than one vertex which is not incident to two of the vertices of the triangle. Then the number of $3$-paths containing the vertex $v$ is at most \[4\sum_{i=1}^3d(x_i)-24+4(3n-6-(\sum_{i=1}^3d(x_i)-3)-1-2)+5+6+2(n-7)+6=14n-42=81.\]Thus $P_4(G)< 306,$ since after deleting the vertex $v$ we get a maximal planar graph on $8$ vertices and $f(8,P_4)=222$. 

Finally, if there is a star edge and all four vertices are incident to two of the vertices of the triangle, then the maximal planar graph is uniquely defined, see Figure~\ref{ext} which is $F_9$. It contains $306$ paths of length three. Therefore $f(9,P_4)=306$, and the unique extremal planar graph on $9$ vertices is $F_9$. 

\end{enumerate}

\begin{figure}[h]
\centering
\begin{tikzpicture}[scale=0.2]
\draw[dashed, red,ultra thick](-6,0)--(0,6)--(6,0)--(-6,0);
\draw[thick](0,3)--(-6,0) (0,3)--(0,6) (0,3)--(6,0);
\draw[thick](-6,0)--(-2.5,5.5)--(0,6); 
\draw[thick](0,6)--(0,12)(-2.5,5.5);
\draw[thick](-6,0)--(-5,8)--(0,12);
\draw[thick](-6,0)--(-4,7)--(0,6);
\draw[thick](-6,0)--(-3.2,6.2)--(0,6);
\draw[thick](0,6)--(-5,8)--(-4,7)--(-3.2,6.2)--(-2.5,5.5);
\draw[black,thick](0,12)..controls (-9,11) and (-7,3) .. (-6,0);
\draw[black,thick](0,12)..controls (9,11) and (7,3) .. (6,0);
\node at (0,-2){$P_4(G)=306$.};
\draw[fill=black](-6,0)circle(10pt);
\draw[fill=black](6,0)circle(10pt);
\draw[fill=black](0,6)circle(10pt);
\draw[fill=black](0,3)circle(10pt);
\draw[fill=black](0,12)circle(10pt);
\draw[fill=black](-4,7)circle(10pt);
\draw[fill=black](-5,8)circle(10pt);
\draw[fill=black](-3.2,6.2)circle(10pt);
\draw[fill=black](-2.5,5.5)circle(10pt);
\node at (1,3.3){$v$};
\end{tikzpicture}
\caption{A maximal planar graph with $9$ vertices, containing maximum number of $3$-paths.}
\label{ext}
\end{figure}
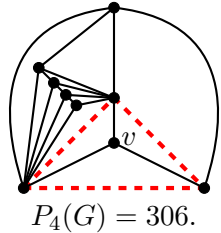

So far we have determined $f(n,P_4)$ for all integers $n$. We also have proven that for all $n$, $n<10$, the planar graph maximizing number of $P_4$'s is unique. Even more we have shown that the unique extremal graph is $F_9$, for $n=9$. 

In the remaining part of this section, we are going to show that for all $n$, $n\geq 9$, the only planar graph maximizing the number of $3$-paths is $F_n$. For this we are going to use a proof by induction on the number of vertices. The base case for $n=9$ is complete. Let us assume that $G$ is an $n$, $n\geq 10$, vertex graph with  $f(n,P_4)$  $3$-paths, then we are going to show that $G=F_n$ under the assumption that the only extremal planar graph with  $(n-1)$ vertices is $F_{n-1}$. From the proof of the upper bound, we know that in order to have $f(n,P_4)$ paths of length three, we have one of two possibilities as outlined in Remark~\ref{equality_triangle}.  

From Lemma~\ref{lemma:d>3} we have the minimum degree of $G$ is $3$, we also have that for any vertex of degree three, say $v$, all other vertices share at least two neighbors with $v$. After removing the vertex $v$, we obtain the unique extremal graph $F_{n-1}$ in this case. Therefore there are only two such faces in $F_{n-1}$, namely the faces with two high degree vertices and a vertex of degree three (the outer face and bottom face from Figure~\ref{qq}(A)). In both settings, after placing $v$ in the proper face and adding all three edges, we obtain the graph $F_n$. Therefore we have the desired result $G=F_n$.
\end{proof}


\section*{Acknowledgements}
The research of the first author is supported by the European Research Council (ERC) grant 648509. The research of the second, fourth and sixth authors is partially supported by the National Research, Development and Innovation Office -- NKFIH, grant K 116769, K 132696 and SNN 117879. The research of the fourth author is partially supported by the Shota Rustaveli National Science Foundation of Georgia SRNSFG, grant number DI-18-118.   The research of the fifth author is supported by the Institute for Basic Science (IBS-R029-C1).

\end{document}